\newtheorem{theorem}{Theorem}[section] 
\newtheorem{lemma}[theorem]{Lemma}     
\newtheorem{corollary}[theorem]{Corollary}
\newtheorem{proposition}[theorem]{Proposition}
\newtheorem*{definition}{Definition}
\newtheorem*{example}{Example}
\newcommand{\ch}{\mbox{\it\c C}\,}
\newcommand{\F}{\mathcal F}
\newcommand{\Z}{\mathbf{Z}}
\renewcommand{\F}{\mathcal{F}}
\renewcommand{\H}{{\mathcal{H}}}
\newcommand{\M}{{\mathcal M}}
\newcommand{\R}{\mathbf R}
\newcommand{\psl}{\mathrm{PSL}_2 (\Z)}
\newcommand{\mat}[2]{\left (\begin{array}{c c}
					#1 \\ #2
	          \end{array}
\right)}
\newcommand{\ccdot}{\!\cdot\!}
\newcommand{\aut}[1]{\mathrm{Aut} ({#1})}
 \newcommand{\sgn}{\mathop{\mathrm{sgn}}}
\newcommand{\tens}{{\otimes}}
\newcommand{\bull}{{\bullet}}
\newcommand{\oasterisk}{\bull}
\title{Binary quadratic forms as dessins} 
\author{A.Muhammed Uluda\u{g} and Ayberk Zeytin}
\address{Department of Mathematics Galatasaray University, Turkey}
\author{Merve Durmu\c{s}}
\address{Department of Mathematics Yeditepe University, Turkey}
\email{muludag{@}gsu.edu.tr, azeytin@gsu.edu.tr and merve1988durmus@gmail.com}
\subjclass{11H55 (primary), 05C10 (secondary)}
\begin{document}

\maketitle

\begin{abstract}
	We show that the class of every primitive indefinite binary quadratic form is naturally represented by an infinite graph  (named \c{c}ark) with a unique cycle embedded on a conformal annulus. This cycle is called the spine of the \c cark. Every choice of an edge of a fixed \c{c}ark specifies an indefinite binary quadratic form in the class represented by the \c{c}ark. Reduced forms in the class represented by a \c{c}ark correspond to some distinguished edges on its spine. Gauss reduction is the process of moving the edge in the direction of the spine of the \c{c}ark. Ambiguous and reciprocal classes are represented by \c{c}arks with symmetries. Periodic \c{c}arks represent classes of non-primitive forms. 
\end{abstract}


\section{Introduction.}
The Euclidean algorithm is the process of comparison of commensurable magnitudes and the modular group $\psl$ is an encoding of this algorithm. Since the intellect is ultimately about comparison of magnitudes, it should come as no surprise that the modular group manifests itself in diverse contexts through its action on mathematical objects, no matter what our level of abstraction is. Among all manifestations of $\psl$ the following four classical actions are of fundamental nature:
 
\begin{itemize}
	\item [1.] its left-action on the infinite trivalent plane tree, 
	\item [2.] its left action on the upper half plane $\H$ by M\"obius transformations, 
	\item [3.] its right-action on the binary quadratic forms, and
	\item [4.] its left-conjugation action on itself.
\end{itemize}

Our aim in this paper is to clarify the connections between these four actions. See \cite{uludag/actions/of/the/modular/group} for an overview of the related subjects from a wider perspective. In particular, the actions in consideration will play a crucial role in observing non-trivial relations between Teichm\"{u}ller theory and arithmetic. Such a point of view will be taken in a forthcoming paper where we construct a global groupoid whose objects are (roughly speaking) all ideal classes in real quadratic number fields and morphisms correspond to basic graph transformations known as flips. And this work can also be considered as an introduction to this upcoming work. 

Let us turn back to our list of actions. The first one is transitive but not free on the set of neither edges nor vertices of the tree in question. In order to make it free on the set of edges, we add the midpoints as extra vertices thereby doubling the set of edges and call the resulting infinite  tree the {\it bipartite Farey tree $\F$}. The modular group action is still transitive on the edge set of $\F$. Now since $\psl$ acts on $\F$ by automorphisms; freely on the set of edges of $\F$, so does any subgroup $\Gamma$ of $\psl$, and by our definition a {\it modular graph}\footnote{Contributing to the long list of names and equivalent/dual notions with various nuisances: trivalent diagrams, cyclic trivalent graphs, cuboid tree diagrams, Jacobi diagrams, trivalent ribbon graphs, triangulations; more generally, maps, ribbon graphs, fat graphs, dessins, polygonal decompositions, lozenge tilings, coset diagrams, etc.} is simply a quotient graph $\Gamma\backslash\F$. This is almost the same thing as a trivalent ribbon graph, except that we consider the midpoints as extra 2-valent vertices and pending edges are allowed. Modular graphs parametrize subgroups up to conjugacy and modular graphs with a base edge classify subgroups of the modular group.

\begin{figure}[h!]
	\centering
	\begin{subfigure}[]{5.5cm}
		\centering
		\includegraphics[scale=0.35]{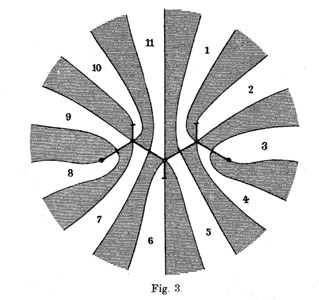}
		\caption{\scriptsize  A dessin  (linienzug of Klein) from 1879 \protect\cite{klein/linienzuge}}
		\label{fig:linienzuge/of/klein}
	\end{subfigure}
	\qquad
	\begin{subfigure}[]{4cm}
		\centering
		\vspace{1mm}
		\includegraphics[scale=0.2]{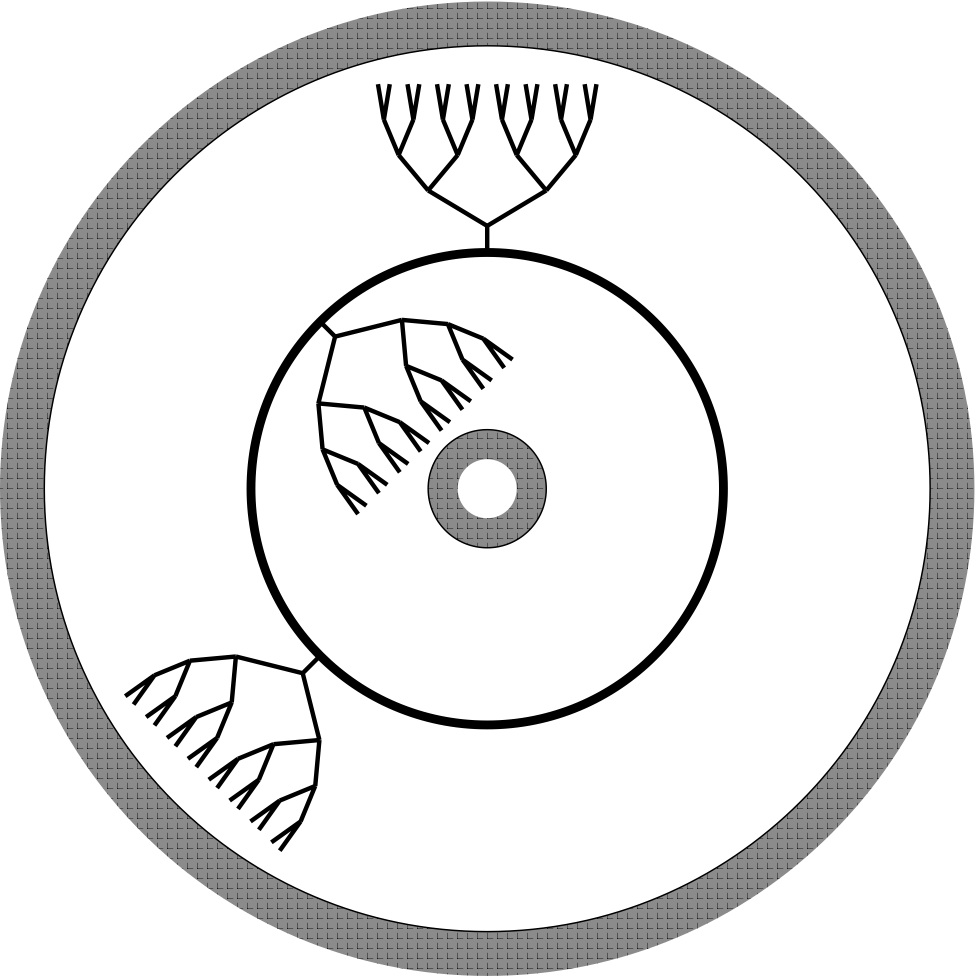}
		\vspace{4mm}
		\caption{\scriptsize A \c{c}ark in its ambient annulus}
		\label{fig:an/example/of/a/cark}
	\end{subfigure}
	\caption{}
	\label{fig:cark/and/its/short/form}
\end{figure}

The second action is compatible with the first one in the following sense: The tree $\F_{top}\subset \H$ which is built as the $\psl$-orbit of the arc connecting two elliptic points on the boundary of the standard fundamental domain, is a topological realization of the Farey tree $\F$. Consequently, $\Gamma\backslash\F_{top} \subset \Gamma\backslash\H$ is a topological realization of the graph $\Gamma\backslash\F$, as a graph embedded in the orbifold $\Gamma\backslash\H$. This latter has no orbifold points if  $\Gamma$ is torsion-free but always has punctures due to the parabolic elements of $\Gamma$, or it has some boundary components. These punctures are in one-to-one correspondence with the left-turn circuits in $\Gamma\backslash\F$. Widening these punctures gives a deformation retract of the ambient orbifold to the graph, in particular the upper half plane $\H$ retracts to the Farey tree $\F_{top}$. To recover the orbifold from the modular graph one glues punctured discs along the left-turn paths of the graph.

If $\Gamma$ is torsion-free of finite index, then $\Gamma\backslash\H$ is an algebraic curve which can be defined over a number field since it is a finite covering of the modular curve ${\mathcal M} = \psl \backslash \H$. According to Bely\u{\i}'s theorem, \cite{belyi}, any arithmetic surface can be defined this way, implying in particular that the action of the absolute Galois group defined on the set of finite coverings $\{ \Gamma\backslash\H\rightarrow {\mathcal M}\}$ is faithful. But these coverings are equivalently described by the graphs $\Gamma\backslash\F$. This striking correspondence between combinatorics and arithmetic led Grothendieck to study dessins from the point of view of the action of the absolute Galois group, see \cite{lando-zvonkine-lowdimtop}. However, explicit computations of covering maps $\Gamma\backslash \H\rightarrow {\mathcal M}$ required by this approach turned out to be forbiddingly hard if one wants to go beyond some basic cases and only a few uniform theorems could be obtained. In fact, dessins are more general graphs that correspond to finite coverings of the thrice punctured sphere, which is equivalent to a subsystem of coverings of  ${\mathcal M}$ since ${\mathbf P}^1\backslash\{0,1,\infty\}$ is a degree-6 covering of ${\mathcal M}$.

The third action in our list is due to Gauss. Here $\psl$ acts on the set of binary quadratic forms via change of variables in the well-known manner. Orbits of this action are called {\it classes} and forms in the same class are said to be {\it equivalent}. Here we are interested in the action on \emph{indefinite} forms. This action always has a cyclic stabilizer group, which is called the proper automorphism group of the form $f$ and denoted $\langle M_f \rangle$. Indefinite binary quadratic forms represent ideal classes in the quadratic number field having the same discriminant as the form and hence are tightly related to real quadratic number fields \cite{computational/nt/cohen}. We provide a succinct introduction to binary quadratic forms later in the paper. 

The correspondence between forms and dessins can be described briefly as follows: to an indefinite binary quadratic form $f$ we associate its proper automorphism group $\langle M_f \rangle$ and to $\langle M_f \rangle$ we associate the infinite graph $\langle M_f \rangle \backslash\F$, which is called a {\it \c{c}ark}\footnote{Turkish {\it \c cark}   (pronounced as ``chark'')  is borrowed from Persian, and it has a common etymology with Indian {\it chakra}, Greek {\it kyklos} and English {\it wheel}.}. Via the topological realization of $\F$, this is a graph embedded in the annulus $\langle M_f \rangle \backslash\H$. The form $f_M$ corresponding to the matrix $M\in \psl$ is found by homogenizing the fixed-point equation of $M$.
\c{C}arks are infinite ``transcendental" graphs whereas the dessins literature consider only finite graphs.  (``transcendental" since they correspond to non-algebraic extensions of the function field of the modular curve). This transcendence implies that \c carks go undetected in the algebraic fundamental group approach, nevertheless we shall see that this does not keep them away from being arithmetic objects. 

Equivalent forms have conjugate stabilizers  (automorphism groups) and conjugate subgroups have isomorphic quotient graphs. It turns out that the set of classes is exactly the set of orbits of hyperbolic elements of $\psl$ under the fourth  (conjugation) action in our list. This set of orbits can be identified with the set of bracelet diagrams with beads of two colors. 

In fact, \c carks can be thought of as $\Z$-quotients of periodic rivers of Conway \cite{conway/sensual/quadratic/form} or graphs dual to the coset diagrams of Mushtaq, \cite{mushtaq/modular/group}. As we shall see later in the paper, \c{c}arks provide a very nice reformulation of various concepts pertaining to indefinite binary quadratic forms, such as reduced forms and the reduction algorithm,  ambiguous forms, reciprocal forms, the Markoff value of a form, etc. For example, \c{c}arks of reciprocal classes admit an involutive automorphism, and the quotient graph gives an infinite graph with two pending edges. These graphs parametrize conjugacy classes of dihedral subgroups of the modular group. \c{C}arks also provide a more conceptual way to understand the relation between coset diagrams and quadratic irrationalities and their properties as studied in \cite{mushtaq/modular/group} or in \cite{malik/zafar/real/quadratic/irrational/numbers}.

\begin{figure}[h!]
	\centering
	 \includegraphics[scale=0.50]{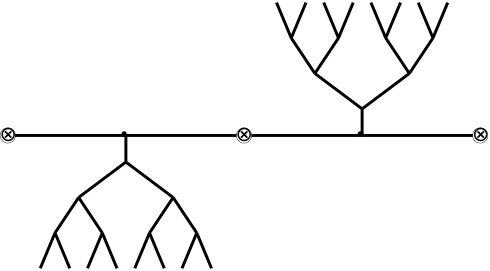}
	\caption{The graph of a reciprocal class. Edges of this graph parametrize the reciprocal forms in this class.}
	\label{fig:recip}
\end{figure}

For us the importance of this correspondence between \c carks and forms lies in that it suggests a concrete and clear way to consider modular graphs as arithmetic objects viz. Gauss’ binary quadratic forms, as it was much solicited by Grothendieck’s dessins school. We also would like to remark that the clarity of the graph language provides us with new points of view on the classical and deep questions concerning the behavior of class numbers yet the structure of class groups via such graphs remain meager. Moreover, the second named author, in \cite{reduction}, has presented an improvement of the age-old reduction algorithm of Gauss and gave an algorithmic solution to the representation problem of binary quadratic forms. The language of \c{c}arks might also provide a new insight o the real multiplication project of Manin and Marcoli, see \cite{manin/real/multiplication}.

Our computations concerning forms and their reduction are done in PARI/GP, \cite{PARI2} with certain subroutines of our own (source code is available upon request). 

\section{Farey tree and modular graphs}
\label{sec:farey/tree/and/modular/graphs}

It is well known that the two elliptic transformations $S (z)=-1/z$ and $R (z)= (z-1)/z$, respectively of orders 2 and 3, generate a group of M\"obius transformations which is isomorphic to the projective group of two by two integral matrices having determinant $1$, the modular group 
\cite{bqf/vollmer}. It is also well-known that  $\psl \cong \langle S\rangle \ast \langle R\rangle =\Z/2\Z \ast \Z / 3\Z$. Let us now consider the graph $\F$  (the bipartite Farey tree), given by the following data:
\begin{eqnarray*}
	E  (\F) & = & \{\{W\} \colon W \in \psl \} \\
	V  (\F) & = & V_{{\tens}}  (\F) \sqcup V_{\oasterisk}  (\F);
\end{eqnarray*}
\noindent where 
\begin{eqnarray*}
	V_{\tens}  (\F) & = & \{ \{W,WS\} \colon W \in \psl \}, \\
	V_{\oasterisk}  (\F) & = & \{ \{W,WR,WR^{2}\} \colon W \in \psl \}.
\end{eqnarray*}

\noindent  is an edge between a vertex $v=\{W,WS\} \in V_{\tens}  (\F)$ and another vertex $v'=\{W',W'R,W'R^{2}\}$ if and only if $\{W,WS\} \cap \{W',W'R,W'R^{2}\} \neq \emptyset$ and there are no other edges. Thus the edge connecting $v$ and $v'$ {\it is} $v\cap v'$, if this intersection is non-empty. Observe that by construction the graph is bipartite. The edges incident to the vertex $\{W,WR,WR^{2}\} \in V_{\oasterisk}$ are $ \{W\}, \{WR\}, \{WR^{2}\} $, and these edges inherit a natural cyclic ordering from the vertex. Thus the Farey tree $\F$ is an infinite bipartite ribbon graph\footnote{A ribbon graph is a graph together with an ordering of the edges that are incident to each vertex in the graph.}. It is a tree since $\psl$ is freely generated by $S$ and $R$.

The group $\psl$ acts on $\F$ from the left, by ribbon graph automorphisms, where $M\in \psl$ acts by
\begin{eqnarray*}
\{W\}\in E (\F) 								& \mapsto & \{MW\}\in E (\F)\\
\{W,WS\}\in  V_{{\tens}} (\F) 			& \mapsto & \{MW,MWS\}\in V_{{\tens}} (\F)\\
\{W,WR,WR^2\} \in V_{\oasterisk} (\F) & \mapsto & \{MW,MWR,MWR^2\}\in V_{\oasterisk} (\F)
\end{eqnarray*}
Notice that the action on the set of edges is nothing but the left-regular action of $\psl$ on itself and therefore is free. 
On the other hand the action is not free on the set of vertices: The vertex $\{W,WS\}$ is fixed by the order-2 subgroup generated by $M=WSW^{-1}$, 
and the vertex $\{W,WR,WR^2\}$ is fixed by the order-3 subgroup generated by $M=WRW^{-1}$.

Let $\Gamma$ be any subgroup of $\psl$. Then $\Gamma$ acts on $\F$ from the left and to $\Gamma$ we associate a quotient graph $\Gamma\backslash\F$ as follows:

\medskip\noindent
\hspace{1.5cm}	 $E  (\Gamma\backslash\F) = \{\Gamma \ccdot \{W\} \colon W \in \psl \}$

\noindent
\hspace{1.5cm}	 $V  (\Gamma\backslash\F) = V_{\tens}  (\F/\Gamma) \cup V_{\oasterisk}  (\F/\Gamma)$;

\noindent where 

\noindent
\hspace{1.5cm}	$V_{\tens}  (\Gamma\backslash\F) = \{ \Gamma \ccdot\{W , WS\} \colon W \in \psl \}$, and

\noindent
\hspace{1.5cm}	$V_{\oasterisk}  (\Gamma\backslash\F) = \{ \Gamma \ccdot\{W, WR, WR^{2}\} \colon W \in \psl \}$.

\medskip\noindent It is easy to see that the incidence relation induced from the Farey tree gives a well-defined incidence relation and gives us the  graph which we call a {\it modular graph}. Thus the edge connecting the vertices $v=\Gamma\ccdot \{W,WS\}$ and $v'=\Gamma\ccdot\{W',W'R,W'R^{2}\}$ is the intersection $v \sqcap v'$, which is of the form $\Gamma\ccdot \{M\}$  if non-empty. There are no other edges. Observe that by construction the graph is bipartite. The edges incident to the vertex $\Gamma\ccdot \{W,WR,WR^{2}\}$ are $ \Gamma\ccdot \{W\}, \Gamma\ccdot \{WR\}, \Gamma\ccdot \{WR^{2}\} $, and these edges inherit a natural cyclic ordering from the vertex\footnote{The ribbon graph structure around vertices of degree 2 is trivial.}. In general $\Gamma\backslash\F$ is a bipartite ribbon graph possibly with pending vertices that corresponds to the conjugacy classes of elliptic elements that $\Gamma$ contains. Conversely, any connected bipartite ribbon graph $G$, with $V (G)=V_{\tens}  (G) \sqcup V_{\oasterisk}  (G)$, such that every $\tens$-vertex is of degree 1 or 2 and every $\oasterisk$-vertex is of degree 1 or 3, is modular since the universal covering of $G$ is isomorphic to $\F$. It takes a little effort to define the fundamental group of $\Gamma\backslash\F$, see \cite{nedela/graphs/and/their/coverings}, so that there is a canonical isomorphism $\pi_1 (\Gamma\backslash\F, \Gamma\ccdot \{I\})\simeq \Gamma<\psl$, with the canonical choice of $\Gamma\ccdot \{I\}$ as a base edge. In general, subgroups $\Gamma$ of the modular group  (or equivalently the fundamental groups $\pi_1 (\Gamma\backslash\F)$) are free products of copies of $\Z$, $\Z/2\Z$ and $\Z/3Z$, see \cite{kulkarni/subgroups/of/the/modular/group}.  Note that two distinct isomorphic subgroups $\Gamma_1$, $\Gamma_2$ of the modular group may give rise to non-isomorphic ribbon graphs $\Gamma_1\backslash\F$ and $\Gamma_2\backslash\F$. We shall see shortly that \c{c}arks constitute good examples of this phenomena. In other words, the fundamental group does not characterize the graph. Another basic invariant of $\Gamma\backslash\F$ is its genus, which is defined to be the genus of the surface constructed by gluing discs along left-turn paths.  This genus is the same as the genus of the Riemann surface ${\mathcal H} / \Gamma$.

\begin{figure}[t!]
	\centering
	\begin{subfigure}[]{6cm}
		\centering
		\begin{tikzpicture} [scale=1.1]
			\draw [line width=0.75mm]  (0,0)-- (0,2)-- (4,0)-- (0,0);
			\fill[gray!40]  (0,0)-- (0,2)-- (4,0)-- (0,0);
			\draw  (0,0) circle  (0.8mm);
			\fill[white]  (0,0) circle  (0.8mm);
			\node at  (0,0) {$\otimes$};
			\draw  (4,0) circle  (0.8mm);
			\fill[white]  (4,0) circle  (0.8mm);
			\draw  (0,2) circle  (0.8mm);
			\fill[black]  (0,2) circle  (0.8mm);
			\draw  (-0.05,1)-- (-0.25,1);
			\node at  (-0.25,0.99) {\scriptsize $<$};
			\node at  (0.25,1)[rotate=90] {\scriptsize{modular arc}};
			\node at  (-0.75,1)[rotate=0] {\scriptsize{modular}};
			\node at  (-0.75,0.75)[rotate=0] {\scriptsize{graph}};
			\draw  (2,-0.05)-- (2,-0.25);
			\node at  (2,-0.25)[rotate=90] {\scriptsize $<$};
			\node at  (2,-0.5)[rotate=0] {\scriptsize{triangulations}};
			\draw [rotate around={0: (0,0)}] (2.05,1)-- (2.2,1.3);
			\node at  (2.2,1.3) [rotate=70]{\scriptsize $>$};
			\node at  (2.25,1.5)[rotate=-30] {\scriptsize{lozenges}};
		\end{tikzpicture}
		\caption{The fundamental region for the modular curve in the upper half plane model.}
	\end{subfigure}
	\begin{subfigure}[]{6cm}
		\centering
		\begin{tikzpicture}  [scale=1.1,dash pattern=on 2pt off 1pt]
			\fill [gray!40]  (-0.5,0) rectangle  (0.5,2);
			{ \pgfsetfillopacity{1}
			\fill[white]  (0:0cm) circle  (1cm);}
			\draw [solid,line width=1] (-2,0)-- (2,0);
			\draw [dash phase=1pt]  (-0.5,0)-- (-0.5,2);
			\draw [solid,line width=1]  (0.5,cos{30})-- (0.5,2);
			\draw [solid,line width=1]  (-0.5,cos{30})-- (-0.5,2);
			\draw [dash phase=1pt]  (0.5,0)-- (0.5,2);
			\draw [dash phase=1pt]  (0,1)-- (0,2.5);
			\node at  (0,2.5)[rotate=-90] {\scriptsize $<$};
			\draw [solid,line width=1.25]  (1,0) arc  (0:180:1);
			\node at  (-0.5,-0.2) {\scriptsize $-1/2$};
			\node at  (0.5,-0.2) {\scriptsize $1/2$};
			\draw  (0,1) circle  (0.75mm);
			\draw  (cos{60},sin{60}) circle  (0.75mm);
			\fill[white]  (cos{60},sin{60}) circle  (0.75mm);
			\draw  (-cos{60},sin{60}) circle  (0.75mm);
			\fill[white]  (-cos{60},sin{60}) circle  (0.75mm);
			\fill[white]  (0,1) circle  (0.75mm);
			\node at   (0,1) {\small $\otimes$};
			\fill[black]  (cos{60},sin{60})  circle  (0.75mm);
			\fill[black]  (-cos{60},sin{60}) circle  (0.75mm);
		\end{tikzpicture}
		\caption{The modular curve. Note that there are two triangles, the second is on the back of the page, glued to this one.}
	\end{subfigure}
	\caption{}
	\label{fig:cark/and/its/short/form}
\end{figure}

The set of edges of $\Gamma\backslash\F$ is identified with the set of right-cosets of $\Gamma$, so that the graph $\Gamma\backslash\F$ has $[\psl : \Gamma]$ many edges. In case $\Gamma$ is a finite index subgroup, the graph $\Gamma\backslash\F$ is finite. In case $\Gamma = \psl$, the quotient graph $\psl \backslash \F$ is a graph with one edge that looks like as follows:

\begin{figure}[h!]
	$$ \stackrel{\psl\ccdot\{I,S\}}{{\otimes}}\!\!\!\!\!\!\!\!\!\!\!\!\!\hspace{-.1mm}\stackrel{\psl\ccdot \{I\}}{-\!\!\!-\!\!\!-\!\!\!-\!\!\!-\!\!\!-\!\!\!-\!\!\!-\!\!\!-\!\!\!-\!\!\!-\!\!\!-\!\!\!-\!\!\!-\!\!\!-\!\!\!-\!\!\!-\!\!\!-\!\!\!-\!\!\!-\!\!\!-\!\!\!-\!\!\!-\!\!\!-\!\!\!-\!\!\!-\!\!\!-\!\!\!-\!\!\!}\!\!\!\!\!\!\!\!\!\!\!\!\!\!\!\stackrel{\psl\ccdot\{I,R,R^2\}}{{   
	\mbox{\raisebox{-.2mm}{\Large $\bullet$}}   }}$$
	\caption{ The modular arc.}
	\label{fig:modular/arc}
\end{figure}

We call this graph the {\it modular arc}. It is a graph whose fundamental group is $\psl$ and whose universal cover is the Farey tree $\F$. In other words modular graphs are coverings of the modular arc. If we consider the action of the modular group on the topological realization $\F_{top}$ of $\F$ mentioned in the introduction, the topological realization of $\psl\backslash\F$ is the arc $\psl\backslash\F_{top}$ in the modular curve connecting two elliptic points.

Every modular graph $\Gamma\backslash \F$ has a canonical ``analytical" realization $\Gamma\backslash \F_{top}$ on the Riemann surface $\Gamma \backslash {\mathcal H}$ with edges being geodesic segments. Equivalently, these edges are lifts of the modular arc by $\Gamma \backslash {\mathcal H} \longrightarrow \psl\backslash {\mathcal H}$. If instead we lift the geodesic arc connecting the $\tens$- elliptic point to the cusp to the surface $\Gamma \backslash {\mathcal H}$, then we obtain another graph on the surface, which is called an {\it ideal triangulation}. Lifting the remaining geodesic arc gives rise to yet another type of graph, called a {\it lozenge tiling}. So there is a triality, not just duality, of these graphs, see Figure~\ref{fig:triality} in which the bold figures represent the members of the triality.

\begin{figure}[h!]
	\centering
	\begin{subfigure}{3cm}
		\includegraphics[scale=0.18]{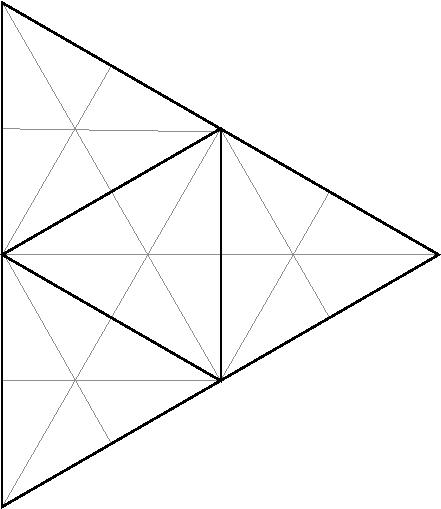}		
		\caption{A triangulation}
	\end{subfigure}
	\begin{subfigure}{3cm}
		\includegraphics[scale=0.18]{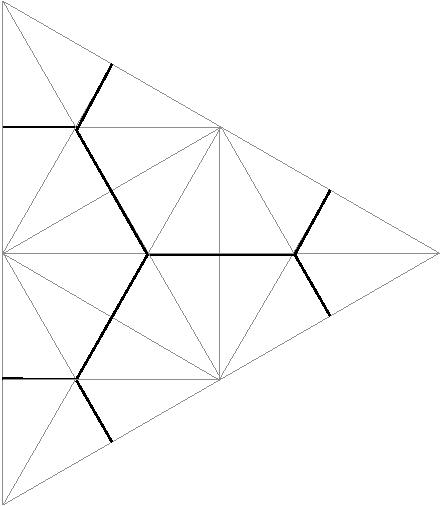}		
		\caption{its dual graph}
	\end{subfigure}
	\begin{subfigure}{3cm}
		\includegraphics[scale=0.18]{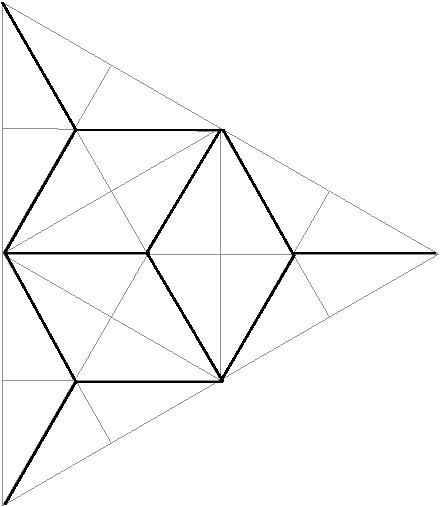}
		\caption{and its lozenge}
	\end{subfigure}
	
	\caption{ Triality of graphs}
	\label{fig:triality}
\end{figure}

In topology, there is a well-known correspondence between subgroups of the fundamental group of a space and the coverings of that space. The following two results are orbifold  (or ``stacky") analogues of this correspondence for coverings of the modular curve, stated in terms of graphs. For more details on fundamental groups and covering theory of graphs see \cite{nedela/graphs/and/their/coverings}.

\begin{proposition}
	If $\Gamma_1$ and $\Gamma_2$ are conjugate subgroups of $\psl$, then the graphs $\Gamma_1\backslash\F$ and $\Gamma_2\backslash\F$ are isomorphic as ribbon graphs. Hence there is a 1-1 correspondence between modular graphs and conjugacy classes of subgroups of the modular group. 
	\label{conjugatesubgroups}
\end{proposition}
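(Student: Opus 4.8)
The plan is to exploit the left action of $\psl$ on $\F$ by ribbon graph automorphisms established above. Since $\Gamma_1$ and $\Gamma_2$ are conjugate, write $\Gamma_2 = g\Gamma_1 g\inv$ for some $g\in\psl$, and let $L_g\colon \F\to\F$ denote the ribbon graph automorphism given by the left action of $g$. First I would record the key \emph{intertwining identity}: for every $\gamma\in\Gamma_1$ one has $L_g\circ L_\gamma = L_{g\gamma g\inv}\circ L_g$, where $g\gamma g\inv\in\Gamma_2$. This identity says precisely that $L_g$ carries each $\Gamma_1$-orbit onto a $\Gamma_2$-orbit; applied to an edge it gives $L_g(\Gamma_1\ccdot\{W\}) = \Gamma_2\ccdot\{gW\}$, and applied to vertices it gives $L_g(\Gamma_1\ccdot\{W,WS\}) = \Gamma_2\ccdot\{gW,gWS\}$ and $L_g(\Gamma_1\ccdot\{W,WR,WR^2\}) = \Gamma_2\ccdot\{gW,gWR,gWR^2\}$.

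Consequently $L_g$ descends to a map $\ol{L_g}\colon \Gamma_1\backslash\F\to\Gamma_2\backslash\F$ on the edge set and on each of the two vertex classes; the map induced by $L_{g\inv}$ is its inverse, so $\ol{L_g}$ is a bijection. It then remains to check that $\ol{L_g}$ respects the combinatorial data defining a ribbon graph. Preservation of the bipartition $V_\tens\sqcup V_\oasterisk$ is immediate from the orbit formulas above. Incidence is preserved because an edge orbit meets a vertex orbit in $\Gamma_1\backslash\F$ if and only if its image meets the image vertex orbit in $\Gamma_2\backslash\F$, this being just the statement that $L_g$ preserves the membership relation $v\sqcap v'$ on $\F$. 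Finally the cyclic ordering of the three edges at each $\oasterisk$-vertex is inherited from the ordering $(\{W\},\{WR\},\{WR^2\})$, which $L_g$ sends to $(\{gW\},\{gWR\},\{gWR^2\})$ in the same cyclic order; hence $\ol{L_g}$ preserves the rotation system. Therefore $\ol{L_g}$ is an isomorphism of ribbon graphs, which proves the first assertion.

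For the ``hence'' it remains to upgrade this to a genuine bijection between conjugacy classes of subgroups and isomorphism classes of modular graphs. Surjectivity is built into the definition, since every modular graph is of the form $\Gamma\backslash\F$, and the first part shows the assignment is constant on each conjugacy class. The substance is the \emph{converse}: if $\Gamma_1\backslash\F$ and $\Gamma_2\backslash\F$ are abstractly isomorphic as ribbon graphs, then $\Gamma_1$ and $\Gamma_2$ must be conjugate. I would prove this by lifting such an isomorphism to the common universal cover $\F$, obtaining a ribbon automorphism $\widetilde\psi$ of $\F$ that intertwines the two deck groups, i.e. $\widetilde\psi\,\Gamma_1\,\widetilde\psi\inv = \Gamma_2$. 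The decisive input is the identification $\paut{\F}=\psl$: every cyclic-order-preserving automorphism of the bipartite Farey tree is left multiplication by a unique element of $\psl$, which follows from the free transitive action of $\psl$ on $E(\F)$ together with the fact that a ribbon automorphism of a tree is determined by its effect on a single framed edge. Granting this, $\widetilde\psi=L_g$ for some $g\in\psl$, and the intertwining relation becomes $g\Gamma_1 g\inv=\Gamma_2$. \textbf{The main obstacle} is exactly this converse step: one must set up the covering theory of ribbon graphs carefully (as in \cite{nedela/graphs/and/their/coverings}) so that an isomorphism of quotients lifts to an automorphism of $\F$ conjugating the deck transformation groups, and one must justify $\paut{\F}=\psl$. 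The forward direction, by contrast, is the purely formal intertwining computation carried out above.
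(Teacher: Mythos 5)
Your proof is correct, and it splits the same way the paper's does: a formal forward direction plus a covering-theoretic converse. The forward half is essentially identical to the paper's --- the paper simply writes down the map $\Gamma_1\ccdot\{W\}\mapsto\Gamma_2\ccdot\{MW\}$ on edges, which is exactly the descent of your $L_g$; your intertwining computation is the justification the paper leaves implicit. Where you genuinely diverge is the converse. The paper argues on the fundamental-group side of the correspondence: a ribbon graph isomorphism $\varphi$ with $\varphi(\Gamma_1\ccdot\{I\})=\Gamma_2\ccdot\{M\}$ induces an isomorphism $\pi_1(\Gamma_1\backslash\F,\Gamma_1\ccdot\{I\})\simeq\pi_1(\Gamma_2\backslash\F,\Gamma_2\ccdot\{M\})$, and it then invokes the canonical identification $\pi_1(\Gamma\backslash\F,\Gamma\ccdot\{I\})\simeq\Gamma$ together with the base-edge change $\pi_1(\Gamma_2\backslash\F,\Gamma_2\ccdot\{M\})\simeq M^{-1}\Gamma_2M$ to conclude that $\Gamma_1$ and $\Gamma_2$ are conjugate. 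You instead work on the deck-transformation side: lift the isomorphism to the universal cover and identify $\paut{\F}=\psl$ via the free transitive action on edges, so that the lift is some $L_g$ conjugating $\Gamma_1$ onto $\Gamma_2$. These are dual formulations of the same covering theory, and both ultimately lean on the machinery of \cite{nedela/graphs/and/their/coverings}, since the quotient maps $\F\rightarrow\Gamma_i\backslash\F$ are branched at vertices fixed by elliptic elements and so are not naive coverings --- you flag this honestly as the main obstacle, and the paper faces the identical issue inside its canonical $\pi_1$-identification. What your route buys is that the rigidity input is explicit and elementary: the computation of $\paut{\F}$ by propagating from a single fixed edge through the cyclic orders at trivalent vertices is a concrete tree argument, whereas the paper compresses exactly this rigidity into the phrase ``isomorphic as subgroups of the modular group.'' What the paper's route buys is brevity, plus base-edge bookkeeping that it reuses immediately in Theorems~\ref{thm:modular/graph/vs/subgroups} and~\ref{thm:modular/graph/vs/cosets}.
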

\begin{proof}
	Let $\Gamma_2=M\Gamma_1M^{-1}$. The desired isomorphism is then the map
	\begin{eqnarray*}
		\varphi:E (\Gamma_1\backslash\F) 	& \rightarrow E (\Gamma_2\backslash\F) \\
						\Gamma_1\ccdot \{W\}		& \,\,\, \mapsto \Gamma_2\ccdot \{MW\}.
	\end{eqnarray*}
	\noindent Note that one has $\varphi (\Gamma_1\ccdot \{I\})=\Gamma_2\ccdot \{M\}$. Suppose now that $\varphi: E (\Gamma_1\backslash\F) \rightarrow E (\Gamma_2\backslash\F)$ is a ribbon graph isomorphism and let $\varphi (\Gamma_1\ccdot \{I\})=\Gamma_2\ccdot \{M\}$. This induces an isomorphism of fundamental groups 

		$$\varphi_*:\pi_1 (\Gamma_1\backslash\F, \Gamma_1\ccdot \{I\}) \simeq \pi_1 (\Gamma_2\backslash\F, \Gamma_2\ccdot \{M\})$$

	\noindent Since $\varphi$ is a ribbon graph isomorphism, these two groups are also isomorphic as subgroups of the modular group. The former group is canonically isomorphic to $\Gamma_1$ a whereas the latter group is canonically isomorphic to 
	$$M^{-1}\pi_1 (\Gamma_2\backslash\F, \Gamma_2\ccdot \{I\})M\simeq M^{-1}\Gamma_2 M$$
\end{proof}

Therefore modular graphs parametrize conjugacy classes of subgroups of the modular group, whereas the edges of a modular graph parametrize subgroups in the conjugacy class represented by the modular graph. In conclusion we get:

\begin{theorem} 
	There is a 1-1 correspondence between modular graphs with a base edge $(G,e)$  (modulo ribbon graph isomorphisms of pairs $ (G,e)$) and subgroups of the modular group (modulo the automorphisms induced by conjugation in $\psl$). 
	\label{thm:modular/graph/vs/subgroups}
\end{theorem}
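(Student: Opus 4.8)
The plan is to produce two mutually inverse maps and then to check that each descends to the indicated equivalence classes, leaning on the covering theory of (ribbon) graphs from \cite{nedela/graphs/and/their/coverings} and on Proposition~\ref{conjugatesubgroups}. The forward map sends a subgroup $\Gamma\leq\psl$ to the pair $(\Gamma\backslash\F,\,\Gamma\ccdot\{I\})$, the modular graph with its canonical base edge; by the discussion preceding the theorem this is a modular graph with distinguished edge and carries the canonical isomorphism $\pi_1(\Gamma\backslash\F,\Gamma\ccdot\{I\})\simeq\Gamma$. The backward map sends a pair $(G,e)$ to the subgroup $p_\ast\pi_1(G,e)\leq\psl$, where $p\colon G\to\psl\backslash\F$ is the covering of the modular arc and $\psl\cong\pi_1(\psl\backslash\F,\ast)$. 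To make sense of this I first record that every modular graph $G$ really is a covering of the modular arc: this is exactly the assertion, made earlier in the text, that the universal cover of any connected bipartite ribbon graph with $\tens$-vertices of degree $1$ or $2$ and $\bull$-vertices of degree $1$ or $3$ is $\F$, the covering map collapsing every edge and every vertex of each color to the unique one of the modular arc. The base edge $e$ supplies the base point that makes the pushforward well defined, and $p_\ast$ is injective because $p$ is a covering.

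Next I would verify that the two composites are the identity. Starting from $\Gamma$, the backward map returns $\pi_1(\Gamma\backslash\F,\Gamma\ccdot\{I\})=\Gamma$ via the canonical isomorphism. Starting from $(G,e)$, the forward map returns $(\Gamma\backslash\F,\Gamma\ccdot\{I\})$ with $\Gamma=p_\ast\pi_1(G,e)$, and here I invoke the Galois correspondence for coverings: two connected pointed coverings of the modular arc are isomorphic as pointed coverings precisely when their pushforward subgroups agree, so there is a base-edge-preserving ribbon isomorphism $(\Gamma\backslash\F,\Gamma\ccdot\{I\})\cong(G,e)$. This is the step that rests most heavily on graph covering theory, but it is routine once $G$ is known to be a covering of the modular arc.

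The delicate point, which I expect to be the main obstacle, is matching the equivalence relations on the two sides. The key structural input is that a ribbon-graph isomorphism between modular graphs is automatically a morphism of coverings of the modular arc: the bipartite two-coloring is part of the data, and away from pending vertices the color of a vertex is forced by its valence, so any ribbon isomorphism descends to the identity of the modular arc. Consequently the abstract isomorphism $\pi_1(G_1,e_1)\to\pi_1(G_2,e_2)$ it induces is realized by a deck transformation of $\F$, that is, by an element of $\psl$ acting by conjugation, exactly as in the proof of Proposition~\ref{conjugatesubgroups}. Unwinding the base-edge bookkeeping then shows that moving the base edge along $G$, or applying a ribbon automorphism, alters the recovered subgroup only by conjugation in $\psl$; equivalently, the identification $\psl\cong\pi_1(\psl\backslash\F,\ast)$ is canonical only up to an inner automorphism, coming from the choice of base lift. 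Either way the ambiguity is precisely by the automorphisms of $\psl$ induced by conjugation, which is the equivalence appearing on the group side of the statement.

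What genuinely has to be written out in full is this last paragraph: one must track how deck transformations of $\F$ move base edges, confirm that they act on the recovered subgroups by conjugation and by nothing more, and rule out spurious color-swaps at pending $\tens$- and $\bull$-vertices that could produce ribbon isomorphisms not covering the identity of the modular arc. Once these checks are in place, the forward and backward maps descend to the quotients and are inverse bijections, establishing the claimed $1$-$1$ correspondence.
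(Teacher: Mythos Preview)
Your proposal is correct and follows essentially the same route as the paper, which in fact gives no separate proof of this theorem at all: the paper simply writes ``In conclusion we get'' after Proposition~\ref{conjugatesubgroups} and the remark that edges of a modular graph parametrize the subgroups in the corresponding conjugacy class. Your write-up is a faithful and more explicit unpacking of that implicit argument, organizing it through the covering-theoretic Galois correspondence; the only step the paper actually carries out is the computation in the proof of Proposition~\ref{conjugatesubgroups}, which is exactly the ingredient you invoke for the ``delicate point'' about matching equivalence relations.
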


\begin{theorem} 
	{There is a 1-1 correspondence between modular graphs with two base edges $ (G,e,e')$  (modulo ribbon graph isomorphisms of pairs $ (G,e,e')$) and cosets of subgroups of the modular group ((modulo the automorphisms induced by conjugation in $\psl$)). 
	\label{thm:modular/graph/vs/cosets}}
\end{theorem}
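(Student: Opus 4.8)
The plan is to layer the argument on top of Theorem~\ref{thm:modular/graph/vs/subgroups}, using the first base edge exactly as there and reading the second base edge off as a coset. First I would invoke Theorem~\ref{thm:modular/graph/vs/subgroups} to attach to the pair $(G,e)$ a subgroup $\Gamma<\psl$, and fix the resulting realization $G=\Gamma\backslash\F$ with $e=\Gamma\ccdot\{I\}$. The crucial point is that the edge set of this graph is \emph{canonically} a coset space: since $E(\F)$ is identified with $\psl$ carrying the free left--regular action, passing to $\Gamma$--orbits identifies
\[
  E(\Gamma\backslash\F)=\{\Gamma\ccdot\{W\}\colon W\in\psl\}\ \xrightarrow{\ \sim\ }\ \Gamma\backslash\psl,\qquad \Gamma\ccdot\{W\}\longmapsto \Gamma W .
\]
Hence the datum of a second base edge $e'=\Gamma\ccdot\{W'\}$ is precisely the datum of a right coset $\Gamma W'$, and I would define the correspondence by $(G,e,e')\mapsto(\Gamma,\Gamma W')$, with inverse sending a subgroup--coset pair $(\Gamma,\Gamma W')$ to the triple $(\Gamma\backslash\F,\ \Gamma\ccdot\{I\},\ \Gamma\ccdot\{W'\})$. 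Surjectivity is then immediate, as every coset is visibly hit.

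Next I would check that the two equivalence relations correspond. The only ambiguity in $\Gamma$ is the conjugation ambiguity already present in Theorem~\ref{thm:modular/graph/vs/subgroups}, and it is implemented concretely by the isomorphism of Proposition~\ref{conjugatesubgroups}: for $\Gamma_2=M\Gamma_1M^{-1}$ the map $\varphi\colon\Gamma_1\ccdot\{W\}\mapsto\Gamma_2\ccdot\{MW\}$ sends $\Gamma_1\ccdot\{I\}$ to $\Gamma_2\ccdot\{M\}$ and the second edge $\Gamma_1\ccdot\{W'\}$ to $\Gamma_2\ccdot\{MW'\}$. On the level of pairs this is exactly the simultaneous conjugation action
\[
  (\Gamma,\ \Gamma W')\ \longmapsto\ \bigl(M\Gamma M^{-1},\ M\Gamma W'\bigr),
\]
where $M\Gamma W'=(M\Gamma M^{-1})(MW')$ is indeed a right coset of $M\Gamma M^{-1}$. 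Thus conjugating the data $(\Gamma,\Gamma W')$ by a single $M\in\psl$ corresponds to a ribbon graph isomorphism carrying the whole triple $(G_1,e_1,e_1')$ to $(G_2,e_2,e_2')$, and conversely any such isomorphism is of this form by (the proof of) Proposition~\ref{conjugatesubgroups}.

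I expect the one genuinely delicate step to be the injectivity bookkeeping: one must verify that requiring an isomorphism to match \emph{both} base edges, $e_1\mapsto e_2$ and $e_1'\mapsto e_2'$, is equivalent to the two pairs being related by a single common conjugator $M$, rather than by independent moves on the subgroup and on the coset. Concretely, matching the first edge pins down the coset of the conjugator $M$, after which matching the second edge forces the residual freedom to align the cosets $\Gamma_1W_1'$ and $\Gamma_2W_2'$; conversely a common $M$ produces a $\varphi$ honoring both edges by the displayed formula. Everything else --- well-definedness of $\varphi$, the identification $E(\Gamma\backslash\F)=\Gamma\backslash\psl$, and the covering-theoretic fact $\pi_1(\Gamma\backslash\F,\Gamma\ccdot\{I\})\cong\Gamma$ --- is either elementary or already supplied by the two preceding results, so no new topological input is needed beyond Theorem~\ref{thm:modular/graph/vs/subgroups}.
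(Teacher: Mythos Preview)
Your proposal is correct and is precisely the argument the paper has in mind. In fact the paper gives no separate proof of this theorem at all: after proving Proposition~\ref{conjugatesubgroups} it simply writes ``In conclusion we get:'' and states Theorems~\ref{thm:modular/graph/vs/subgroups} and~\ref{thm:modular/graph/vs/cosets} without further argument. Your write-up is the natural explicit version of that implicit deduction --- using the first base edge to pin down $\Gamma$ via Theorem~\ref{thm:modular/graph/vs/subgroups}, reading the second base edge as a right coset through the identification $E(\Gamma\backslash\F)\cong\Gamma\backslash\psl$, and checking via the explicit isomorphism $\varphi$ of Proposition~\ref{conjugatesubgroups} that matching both base edges amounts to simultaneous conjugation by a single $M$ --- so there is nothing to compare beyond noting that you have supplied the details the paper omits.
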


\section{\c{C}arks}
\label{sec:carks}

A {\it \c{c}ark} is a modular graph of the form $\ch_M:=\langle M \rangle \backslash \F$ where $M$ is a hyperbolic element of the modular group. One has 
$$
	\pi_1 (\langle M \rangle\backslash \F)=\langle M \rangle\simeq \Z,
$$ 
so the \c{c}ark $\langle W \rangle\backslash \F$ is a graph with only one circuit, which we call the {\it spine} of the \c{c}ark. Every \c{c}ark has a canonical realization as a graph $\langle M \rangle \backslash \F_{top}$ embedded in the surface $\langle M \rangle \backslash \H$, which is an annulus since $M$ is hyperbolic. In fact $\langle M \rangle \backslash \H$ is the annular uniformization of the modular curve $\M$ corresponding to $M\in\pi_1 (\M)$. Again by hyperbolicity of $M$, this graph will have infinite ``Farey branches" attached to the spine in the direction of both of the boundary components of the annulus\footnote{If $M$ is parabolic, then $\langle W \rangle\backslash \F$ has Farey branches attached to the spine in only one direction, and its topological realization $\langle M \rangle \backslash \F_{top}$ sits on a punctured disc. If $M$ is elliptic, $\langle W \rangle\backslash \F$ is a tree with a pending edge which abut at a vertex of type ${\otimes}$ when $M$ is of order 2 and of type $\bullet$ when $M$ is of order 3. Its topological realization $\langle M \rangle \backslash \F_{top}$ sits on a disc with an orbifold point.}. By Proposition~\ref{conjugatesubgroups} the graphs $\ch_M$ and $\ch_{XMX^{-1}}$ are isomorphic for every element $X$ of the modular group and by Theorem~\ref{thm:modular/graph/vs/subgroups} we deduce the following result, see \cite{merve/tez}:


\begin{theorem}  
	There are one-to-one correspondences between
	\begin{itemize}
		\item[i.] \c{c}arks and conjugacy classes of subgroups of the modular group generated by a single hyperbolic element, and
		\item[ii.] \c{c}arks with a base edge and subgroups of the modular group generated by a single hyperbolic element.
	\end{itemize}
	\label{cor:1/to/1/corr/between/undirected/carks}
\end{theorem}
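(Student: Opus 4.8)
The plan is to derive this result directly from the general correspondence already established in Theorem~\ref{thm:modular/graph/vs/subgroups} and Proposition~\ref{conjugatesubgroups}, by specializing them to the subclass of subgroups that are generated by a single hyperbolic element. The key observation is that the two general theorems give bijections between \emph{all} modular graphs (resp. with a base edge) and conjugacy classes of \emph{all} subgroups (resp. subgroups), so the only work is to identify which modular graphs correspond to the cyclic hyperbolic subgroups and to check that the correspondence restricts cleanly.

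First I would recall that a \c{c}ark is by definition a modular graph of the form $\ch_M=\langle M\rangle\backslash\F$ with $M$ hyperbolic. Since $M$ is hyperbolic it has infinite order, so $\langle M\rangle\simeq\Z$ is an infinite cyclic, torsion-free subgroup of $\psl$. Conversely, every subgroup of $\psl$ generated by a single hyperbolic element is of exactly this form. Thus the set of \c{c}arks (up to ribbon graph isomorphism) is precisely the image, under the bijection of Theorem~\ref{thm:modular/graph/vs/subgroups}, of the set of conjugacy classes of such cyclic hyperbolic subgroups. For statement (i), I would invoke Proposition~\ref{conjugatesubgroups}: conjugate subgroups $\langle M\rangle$ and $\langle XMX^{-1}\rangle=X\langle M\rangle X^{-1}$ give ribbon-graph-isomorphic \c{c}arks, and the general 1-1 correspondence between modular graphs and conjugacy classes of subgroups restricts to a 1-1 correspondence between \c{c}arks and conjugacy classes of cyclic hyperbolic subgroups.

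For statement (ii), I would apply the base-edge refinement of Theorem~\ref{thm:modular/graph/vs/subgroups}: a modular graph with a base edge corresponds to an actual subgroup (not merely a conjugacy class), via the canonical isomorphism $\pi_1(\langle M\rangle\backslash\F,\langle M\rangle\ccdot\{I\})\simeq\langle M\rangle$. Choosing a base edge $e$ in the \c{c}ark $\ch_M$ pins down a specific cyclic hyperbolic subgroup in the conjugacy class, and conversely each such subgroup determines its \c{c}ark together with the distinguished edge $\langle M\rangle\ccdot\{I\}$. The content is simply that the general bijection of the pointed version, already proved, carries \c{c}arks-with-base-edge to the subcollection of cyclic hyperbolic subgroups.

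The only genuine point requiring care is closure of the subclass under the correspondence, i.e. verifying that the property ``generated by a single hyperbolic element'' is both conjugation-invariant and detectable purely from the isomorphism type of the ribbon graph, so that the restriction of a bijection to a saturated subclass is again a bijection. Conjugation-invariance is immediate since $X\langle M\rangle X^{-1}=\langle XMX^{-1}\rangle$ and conjugation preserves the hyperbolic type (it preserves $|\mathrm{tr}|$). Graph-theoretic detectability follows from the fact, noted in the text, that $\pi_1(\ch_M)\simeq\langle M\rangle\simeq\Z$, so a modular graph is a \c{c}ark exactly when its fundamental group is infinite cyclic and the generator is hyperbolic—equivalently, when the graph has a unique circuit (its spine) with infinite Farey branches attached toward both boundary components of the ambient annulus, as described above. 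Hence no separate obstacle arises; the theorem is a direct specialization, and I would present it as such rather than reproving the general correspondence.
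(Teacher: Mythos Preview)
Your proposal is correct and follows essentially the same approach as the paper: the paper derives this theorem directly from Proposition~\ref{conjugatesubgroups} and Theorem~\ref{thm:modular/graph/vs/subgroups} by specialization, exactly as you do. You supply more detail than the paper (in particular the verification that the class of cyclic hyperbolic subgroups is conjugation-invariant and graph-theoretically detectable), but this is elaboration of the same argument rather than a different route.
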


\begin{figure}[h!]
	\centering
	 \includegraphics[scale=0.23]{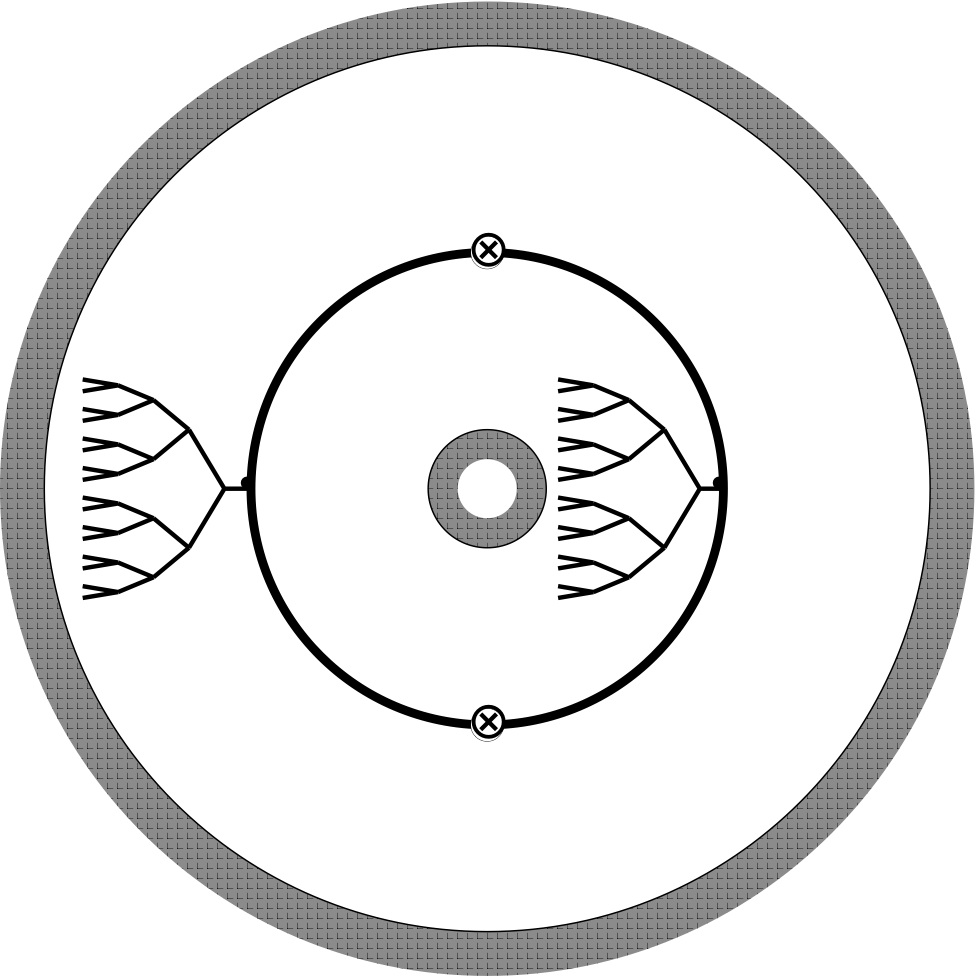}
	\caption{ The \c cark $\F/\langle S R^2 S R \rangle$.}
	\label{fig:quotient/by/hyperbolic}
\end{figure}

A \c{c}ark is said to be directed if we choose an orientation for the spine. 

\begin{corollary} 
	There are one-to-one correspondences between
	\begin{itemize}
		\item[i.] hyperbolic elements of the modular group and directed \c carks with a base edge, and
		\item[ii.] conjugacy classes of hyperbolic elements of the modular group and directed \c carks.
	\end{itemize}
	\label{cor:1/to/1/corr/between/directed/carks}
\end{corollary}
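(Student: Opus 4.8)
The plan is to obtain both correspondences by enriching the undirected statements of Theorem~\ref{cor:1/to/1/corr/between/undirected/carks} with the single extra binary datum of a spine orientation, matching it against the binary ambiguity of a generator versus its inverse. The starting point is the algebraic observation that a subgroup generated by a single hyperbolic element $M$ is infinite cyclic: since $M$ is hyperbolic it has infinite order, so $\langle M\rangle\simeq\Z$ and its only generators are $M$ and $M^{-1}$. Consequently the datum of ``a cyclic subgroup together with a distinguished generator'' carries exactly the same information as ``a single hyperbolic element'': to $M$ one associates the pair $(\langle M\rangle, M)$, and conversely a chosen generator of such a subgroup is a hyperbolic element (the inverse of a hyperbolic element is again hyperbolic) that recovers both the subgroup and the element. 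Thus I would recast each part of the corollary as the undirected correspondence of Theorem~\ref{cor:1/to/1/corr/between/undirected/carks}, refined by a choice of generator.

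The first key step is to identify a choice of generator of $\langle M\rangle$ with an orientation of the spine. Recall that $\pi_1(\ch_M,\,\langle M\rangle\ccdot\{I\})$ is canonically isomorphic to $\langle M\rangle$, and that the spine is the unique cycle of $\ch_M$, so that a loop running once around the spine generates $\pi_1(\ch_M)\simeq\Z$. Hence orienting the spine is precisely the choice of which of the two generators $M,M^{-1}$ of $\langle M\rangle$ the once-traversed oriented spine represents. Geometrically this is transparent in the annular realization $\langle M\rangle\backslash\H$: the axis of the hyperbolic element $M$ projects to the core circle of the annulus, to which the spine is freely homotopic, and the translation direction of $M$ along its axis descends to an orientation of that core, with $M^{-1}$ inducing the opposite one. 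Combining this with part (ii) of Theorem~\ref{cor:1/to/1/corr/between/undirected/carks}, a \c{c}ark with a base edge determines the subgroup $\langle M\rangle$ and an orientation of its spine determines a generator; together they determine the hyperbolic element $M$, and conversely. This proves (i).

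For (ii) I would pass from subgroups to conjugacy classes by forgetting the base edge, using part (i) of Theorem~\ref{cor:1/to/1/corr/between/undirected/carks} together with Proposition~\ref{conjugatesubgroups}: a directed \c{c}ark (no base edge) is an isomorphism class of pairs (\c{c}ark, spine orientation), which by the previous step is the conjugacy class of a pair $(\langle M\rangle, M)$, i.e.\ the conjugacy class of the generating hyperbolic element $M$, since $X\langle M\rangle X^{-1}=\langle XMX^{-1}\rangle$ carries the generator $M$ to $XMX^{-1}$. The point demanding care---and the main obstacle---is the bookkeeping for classes satisfying $M\sim M^{-1}$ (the reciprocal and ambiguous classes): here the single conjugacy class $[M]=[M^{-1}]$ must match a single directed \c{c}ark. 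This is exactly what happens, because a conjugating element $X$ with $XMX^{-1}=M^{-1}$ induces an automorphism of $\ch_M$ that reverses the spine orientation, so the two oriented \c{c}arks are isomorphic as directed \c{c}arks and collapse to one; when $M\not\sim M^{-1}$ no such automorphism exists, the two orientations give distinct directed \c{c}arks, and these match the two distinct classes $[M]$ and $[M^{-1}]$. Verifying this dichotomy---and that it is the only source of extra symmetry---is where the argument must be made carefully, and it completes the correspondence in (ii).
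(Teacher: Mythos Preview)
Your argument is correct and is exactly the intended derivation: the paper states this corollary without proof immediately after Theorem~\ref{cor:1/to/1/corr/between/undirected/carks}, treating it as the evident refinement obtained by matching the binary datum ``orientation of the spine'' with the binary datum ``choice of generator $M$ versus $M^{-1}$ of $\langle M\rangle$''. Your write-up is in fact more careful than the paper's (nonexistent) proof, in particular your explicit treatment of the reciprocal case $M\sim M^{-1}$, which the paper only addresses later when discussing reciprocal \c{c}arks.
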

\vspace{-5mm}
\subsection{Counting \c{C}arks}
\label{sec:necklace/bracelet} 
\c{C}arks are infinite graphs, and each edge of a \c{c}ark carries a name which is an infinite coset. In fact, all the combinatorial information of a \c cark can be encoded in a finite storage as follows: First remove all $\tens$-vertices of the \c{c}ark. Next, turn once around the spine. Upon meeting a $\bullet$-vertex on which a branch attached by $R$, cut that branch and tag that $\bullet$-vertex with a ``0". In a similar fashion, upon meeting a $\bullet$-vertex on which a branch attached by $R^{2}$, cut that branch and tag that $\bullet$-vertex with a ``1". We obtain a finite graph called a {\it binary bracelet} which is by definition an equivalence class of binary strings under cyclic permutations (i.e. rotations) and reversals. Conversely, by using the convention $0\leftrightarrow R$ and $1\leftrightarrow R^2$ we can reconstruct the \c{c}ark from its bracelet.

Rotations and reversals generate a finite dihedral group, and a binary bracelet may equivalently be described as an orbit of this action.

\begin{figure}[h!]
	\centering
	\begin{subfigure}{5cm}
		\includegraphics[scale=0.35]{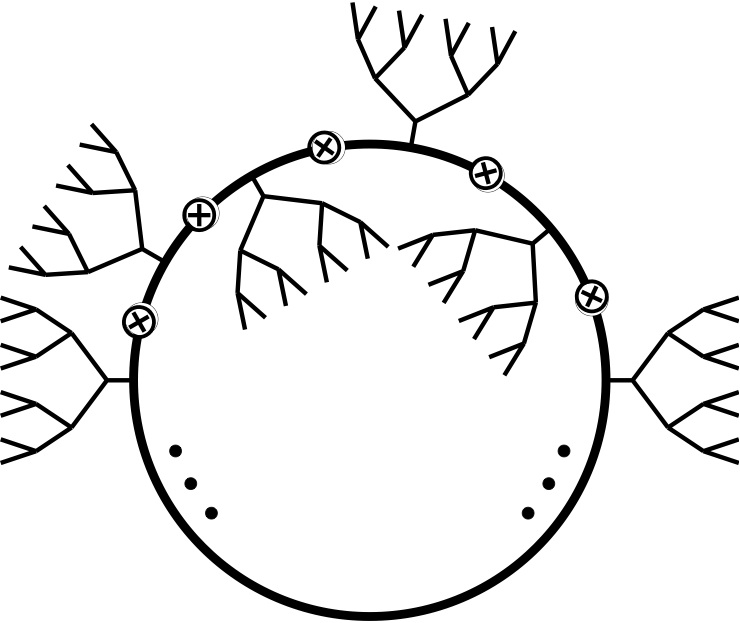}
		\caption{}
	\end{subfigure}
	\begin{subfigure}{5cm}
		\includegraphics[scale=0.43]{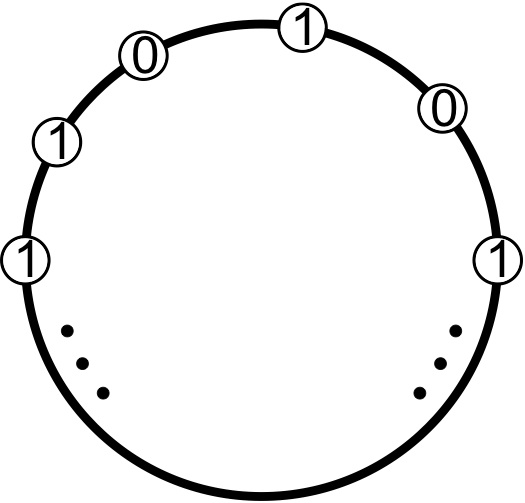}
		\caption{}
	\end{subfigure}
	
	\caption{From \c{c}arks to bracelets}
	\label{fig:cark/and/its/short/form}
\end{figure}

For $n = 1,2,...,15$ the number of binary bracelets with $n$ vertices is 
\[
2, 3, 4, 6, 8, 13, 18, 30, 46, 78, 126, 224, 380, 687, 1224. 
\]
This is sequence A000029 (M0563) in OEIS~\cite{oeis/integer/sequences}.
The number of binary bracelets  (\c{c}arks) of length $n$ is
\begin{eqnarray*}
	B (n) = {\frac{1}{2}}N (n) + {\frac{3}{4}}2^{n/2}
\end{eqnarray*}
if $n$ is even and 
\begin{eqnarray*}
	B(n) = {\frac{1}{2}}N (n) + {\frac{1}{2}}2^{ (n+1)/2}
\end{eqnarray*}
if $n$ is odd where $N (n)$ is the number of binary necklaces of length $n$.
An equivalence class of binary strings under rotations  (excluding thus reversals) is called a {\it binary necklace}, or a {\it cyclic binary word}.
They are thus orbits of words under the action of a cyclic group and they correspond to directed \c{c}arks.
For $n=1,2,...,15$ the number of  binary necklaces of length $n$ is
\[
N (n)=2, 3, 4, 6, 8, 14, 20, 36, 60, 108, 188, 352, 632, 1182, 2192,
\] 
which is sequence A000031 (M0564) in OEIS. 
The number of necklaces  (directed \c carks) of length $n$ is given by MacMahon's formula from 1892  (also called Witt's formula) reads (see \cite{bouallegue/on/primitive/words}, \cite{macmahon}):
 
\[
N (n)={1\over n}\sum_{d\mid n}\varphi (d)2^{n/d}={1\over n}\sum_{j=1}^n2^{\gcd (j,n)}
\]
where $\varphi$ is Euler's totient function.

A \c{c}ark is called {\it primitive} if its spine is not periodic. Aperodic binary necklaces correspond to primitive directed \c{c}arks. For $n=1,2,...,15$  the number of  aperiodic necklaces of length $n$ is
$$
L (n)=2, 1, 2, 3, 6, 9, 18, 30, 56, 99, 186, 335, 630, 1161, 2182,
$$
which is sequence A000031 (M0564) in the database. There is a formula for the number of aperiodic necklaces of length $n$ in terms of M\"obius' function $\mu$:
$$
L (n)={1\over n}\sum_{d\mid n}\mu (d)2^{n/d}={1\over n}\sum_{d\mid n}\mu (n/d)2^{d}
$$

As mentioned, binary necklaces (or cyclic binary words or directed \c{c}arks) may be viewed as orbits of words under the action of the cyclic group. Choosing an ordering of our letters $\{0,1\}$ (i.e. $0<1$) and imposing the lexicographic ordering of the words, one may choose a minimal representative in each orbit. The minimal representative of a primitive (aperiodic) word is called a {\it Lyndon word}. They were first studied in connection with the construction of bases for free lie algebras and they appear in numerous contexts. In our case they are
$$
0,1,01,001,011, 0001, 0011, 0111, 00001,00011,00101, 00111,01011,01111 \dots
$$

One can similarly find representatives for aperiodic binary bracelets (=primitive indefinite binary quadratic forms; see below). There are effective algorithms to list all primitive necklaces and bracelets up to a given length 
(i.e. Duval's algorithm \cite{duval/classes/de/conjugation},  the algorithm due to Fredricksen, Kessler and Maiorana \cite{FKM}, Sawada's algorithm \cite{sawada}, etc). Translated into the language of binary quadratic forms, this means that it is possible to single out a unique reduced representative in each class of a primitive indefinite binary quadratic form and that it is possible to effectively enumerate all classes of primitive indefinite binary quadratic forms by specifying those reduced representatives.

To sum up, we may represent primitive \c carks by primitive bracelets. In order to shorten this representation further, 
we may count the number of consecutive 0's and 1's and represent \c carks as sequences of natural numbers
$(n_0, n_1, \dots n_{2k})^{0,1}$, if we agree that\footnote{Note that a Lyndon word always start with a 0 and ends with a 1.} this sequence encodes a bracelet that starts with a $0$ if the exponent is $0$ and $1$ if the exponent is  $1$. This representation is directly connected to the ``minus" continued fractions (see Zagier \cite{zagier/zetafunktionen/quadratische/zahlkorper}).

A primitive word may have two types of symmetries: invariance under the swap of symbols $0 \leftrightarrow 1$ and invariance under reversal, i.e. palindromic symmetry. The first symmetry corresponds to ambiguous binary quadratic forms and the second symmetry corresponds to reciprocal binary quadratic forms, as we shall see. The swap of symbols $0 \leftrightarrow 1$ corresponds to inversion in the class group.

\subsection{\c Cark Invariants}
There are several natural invariants associated to a \c cark {\it \c C}. The combinatorial length $l_c (\ch)$ of its spine is an invariant. A hyperbolic invariant of a \c cark is the metric length $l_h (\ch)$  of the closed geodesic in the annular surface under its hyperbolic metric induced by the \c cark. A conformal invariant of a \c cark is the modulus $m (\ch)$  of the associated annulus. Finally, the discriminant $\Delta (\ch)$ of the associated form and the absolute value of the trace $\tau (\ch)$ of the associated matrix are two arithmetic invariants with $\Delta=\tau^2-4$. One has
$$
l_h (\ch)=2 \mbox{ arccosh }  (\tau/2), 
\quad 
m (\ch)= \exp\left (
\frac{\pi^2}
{
\log |\frac{\tau\pm \sqrt{\Delta}}{2}|
}
\right)
$$
The modulus is found as follows: Any hyperbolic element $M\in PSL_2 (\R)$ is conjugate to an element of the form 
\[
N:=XMX^{-1}=\mat{\alpha & 0 }{0 & \frac{1}{\alpha}}\]
where $\alpha$ is the multiplier of $M$. Since the trace is invariant under conjugation, one has
$\tau:=\mathrm{tr} (M)=\alpha+1/\alpha \Rightarrow \alpha^2-\tau\alpha+1=0 \Rightarrow \alpha=\frac{\tau\pm \sqrt{\tau^2-4}}{2}$.

Now $N$ acts by M\"obius transformation $z\mapsto \alpha^2 z$, and the quotient map is 
$f (z)=z^{2\pi i/\log \alpha^2}$ with the annulus $f ({\mathcal H})=\{z\, :\, e^{-2\pi^2/\log \alpha^2} < |z| <1\}$ as its image.
Hence the modulus of the ambient annulus of the \c cark is $e^{2\pi^2/\log \alpha^2}=e^{\pi^2/\log |\alpha|}$.
It is possible to write down the uniformization $U_M:\H\rightarrow \ch_M$ explicitly, which is a quite involved expression. 
The annular uniformization  $\ch_M\rightarrow \psl\backslash \H$ can be written as $j\circ U_M^{-}$.

\section{Binary Quadratic Forms and \c{C}arks}
\label{sec:bqfs/and/carks}
A \emph{binary quadratic form} is a homogeneous function of degree two in two variables $f  (x,y)=Ax^2+Bxy+Cy^2$   (denoted $f=  (A,B,C)$ or in the matrix form:
\begin{equation}
	W_{f} = \mat{A & B / 2}{B / 2 & C}
\end{equation}
so that $f  (x,y)=  (x,y)W_{f}  (x,y)^{t}$). If the coefficients $A,B,C$ are integers the form is called \emph{integral} with discriminant $\Delta  (f) = B^{2} - 4AC$. If $f$ is integral and $\gcd  (A,B,C)=1$ then $f$ is called \emph{primitive}. Following Gauss we will call a form $f=  (A,B,C)$ \emph{ambiguous} if $B=kA$ for some $k \in \Z$. Finally a form $f=  (A,B,C)$ will be referred to as \emph{reciprocal} whenever $C=-A$, \cite{sarnak/reciprocal/geodesics}.

Note that $\Delta  (f) = -4 \det  (W_{f})$. Given a symmetric two by two matrix we write $f_{W}$ to denote the binary quadratic form associated to $W$. Recall that a form $f$ is called
\begin{itemize}
	\item positive definite if and only if $\Delta  (f)< 0$ and $A>0$,
	\item negative definite if and only if $\Delta  (f)<0$ and $0>A$,
	\item indefinite if and only if $\Delta  (f)>0$. 
\end{itemize}

The group $\psl$ acts on the set of all integral binary quadratic forms by
\begin{eqnarray*}
	\mathit{Forms} \times \psl		\to  & \mathit{Forms}	& \\
	  (f,U) 							\mapsto 	& {U \ccdot f} := f  (U  (x,y)^{t}) \\
													& 	 =   (x,y)U^{t}W_{f}U  (x,y)^{t}
\end{eqnarray*}

We call two binary quadratic forms \emph{equivalent} if they belong to the same $\psl$ orbit under the above action, under which discriminant is invariant. Let us denote the $\psl$-orbit   (or the equivalence class) of $f$ by $[f]$. The stabilizer  of $f$ is called its {\it automorphism group}, denoted by $\aut{f}$, and elements of $\aut{f}$ are called automorphisms of $f$. For a positive definite binary quadratic form $f$, the group $\aut{f}$ is trivial unless $\Delta  (f) = -3 $ or $-4$;  $\aut{f} \simeq \Z / 4\Z$ if $\Delta  (f) = -4$ and $\aut{f} \simeq \Z / 6\Z$ in case $\Delta  (f) = -3$, \cite[p.29]{bqf/vollmer}. On the other hand, for an indefinite binary quadratic form one has $\aut{f} \simeq \Z$.

Given an indefinite binary quadratic form $f=  (A,B,C)$ a generator of its automorphism group will be called its \emph{fundamental automorphism}. Note that there are two fundamental automorphisms, one being $M_{f}$, the other being its inverse, $M_{f}^{-1}$. Every integral solution $ (\alpha, \beta)$ of Pell's equation:
\begin{equation}
	X^{2} - \Delta(f) Y^{2} = + 4
	\label{eqn:Pell}
\end{equation}
corresponds to an automorphism of $f$ given by the matrix:
\begin{eqnarray*}
	\mat{\frac{\alpha - B \beta }{2} & -C \beta}{A \beta & \frac{\alpha + \beta B}{2}}.
\end{eqnarray*}
It turns out that the fundamental automorphism is the one having minimal $\beta$ \cite[Proposition 6.12.7]{bqf/vollmer}.

Conversely, to any given hyperbolic element, say $M = \mat{p&q}{r&s}\in \psl$ let\\[-1.6mm] us associate the following binary quadratic form:

\begin{equation}
	f_{M} =  \frac{\mathrm{sgn}  (p+s)  }{\textrm{gcd}  (q,s-p,r)}\bigl(r,  s-p,-q\bigr)
	\label{eq:from/matrices/to/bqfs}
\end{equation}

Observe first that $M\to f_{M}$ is well-defined and that its image is always primitive and indefinite.  At this point let us state a direct consequence of Theorem~\ref{thm:modular/graph/vs/subgroups}:
\begin{corollary}
	The maps  $\langle M \rangle\backslash \F \longleftrightarrow M\longrightarrow f_{M}$  defines a surjection from the set of oriented \c{c}arks with a base edge to primitive indefinite binary quadratic forms.
	\label{cor:oriented/carks/with/a/base/edge/vs/bqf}
\end{corollary}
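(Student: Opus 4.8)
The plan is to factor the composite map as the bijection of Corollary~\ref{cor:1/to/1/corr/between/directed/carks}(i), between oriented (directed) \c{c}arks with a base edge and hyperbolic elements $M\in\psl$, followed by the assignment $M\mapsto f_M$ of \eqref{eq:from/matrices/to/bqfs}. Since the first arrow is already a bijection and since $M\mapsto f_M$ always produces a primitive indefinite form, the only remaining content is surjectivity: every primitive indefinite form $f$ must arise as $f_M$ for some \emph{hyperbolic} $M$. Injectivity is neither asserted nor true, so no effort is spent on it.

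To produce such an $M$ I would realize $f$ as the form attached to its own fundamental automorphism. Given a primitive indefinite $f=(A,B,C)$ we have $\aut f\simeq\Z$, and the fundamental automorphism is the matrix attached to the minimal positive Pell solution $(\alpha,\beta)$ of \eqref{eqn:Pell} with $\alpha>0$, namely
\[
M_f=\mat{\frac{\alpha-B\beta}{2} & -C\beta}{A\beta & \frac{\alpha+B\beta}{2}}.
\]
Writing $M_f=\mat{p&q}{r&s}$ one reads off $r=A\beta$, $s-p=B\beta$, $-q=C\beta$ and $p+s=\alpha>0$. Substituting these into \eqref{eq:from/matrices/to/bqfs} yields the triple $\beta\,(A,B,C)$, whose entrywise gcd equals $|\beta|\gcd(A,B,C)=|\beta|$ by primitivity; combined with $\sgn(\alpha)=+1$ and $\beta>0$ this collapses to $f_{M_f}=(A,B,C)=f$. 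This one-line substitution is the heart of the argument.

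It then remains to confirm that $M_f$ is a bona fide hyperbolic element of $\psl$. Its determinant is $\tfrac14(\alpha^2-\Delta\beta^2)=1$ by the Pell relation, and hyperbolicity is immediate from $\alpha^2=4+\Delta\beta^2>4$, i.e. $|\tau|=|\alpha|>2$. The one point demanding a little care is integrality of the two halved entries: here I would use that Pell forces $\alpha^2\equiv(B\beta)^2\pmod 4$, so $\alpha$ and $B\beta$ have the same parity and $\tfrac{\alpha\mp B\beta}{2}\in\Z$. Via Corollary~\ref{cor:1/to/1/corr/between/directed/carks} this hyperbolic $M_f$ then names an oriented \c{c}ark with a base edge lying over $f$, which gives surjectivity. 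I expect this parity check to be the only mildly delicate step; conceptually, realizing each form through its fundamental automorphism is what makes the surjection transparent, while the observation that every power $M_f^{\,n}$ maps to the same $f$ makes the failure of injectivity equally clear.
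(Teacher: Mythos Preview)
Your proof is correct and follows essentially the same route as the paper: both factor through Corollary~\ref{cor:1/to/1/corr/between/directed/carks}(i) and then establish surjectivity by producing, for each primitive indefinite $f$, a hyperbolic preimage built from a Pell solution of $X^2-\Delta(f)Y^2=4$, verifying that it maps back to $f$ under \eqref{eq:from/matrices/to/bqfs}. Your write-up is in fact more careful than the paper's in two respects---you make the parity argument for integrality of the diagonal entries explicit, and you verify hyperbolicity via $|\alpha|>2$---whereas the paper simply declares the constructed matrix to be hyperbolic; the paper in turn notes that \emph{any} Pell solution (not just the fundamental one) works, which is a slight generalization you do not need but which underlies your closing remark about powers $M_f^{\,n}$.
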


\begin{proof}
	We saw that an oriented \c{c}ark with a base edge determines a hyperbolic element of $\psl$. And this element in turn determines an indefinite binary quadratic form via  $M\to f_{M}$. Conversely, given a primitive indefinite binary quadratic form $f =   (A,B,C)$ to find $\beta \in \Z$ such that the matrix 
	\begin{eqnarray*}
		\mat{\beta & A}{-C & B + \beta} \in \psl
	\end{eqnarray*}
	we look at solutions $(x,y)$ of Pell's equation $X^{2} - \Delta  (f)Y^{2} = 4$. Using any such $y$ we construct the hyperbolic element:
	\begin{eqnarray*}
		M_{f} = \mat{\beta & y C}{y A & yB + \beta},
	\end{eqnarray*}
	where $\beta = \frac{-y B \pm x}{2}$. Both choices of the sign produces a matrix which maps onto $f$. In fact, the two matrices are inverses of each other in $\psl$.
\end{proof}

\begin{example}
	Consider the form $  (1,7,-1)$. It has discriminant $53$. The pair $  (51,7)$ is a solution to the Pell equation $X^{2} - 53Y^{2} = 4$. The two $\beta$ values corresponding to this solution are $-50$ and $1$. Plugging these two values into the matrix above we get:
	\begin{eqnarray*}
		M_{o} = \mat{1 & 7}{7 & 50} \mbox{ and } M_{o}^{-1}= \mat{-50 & 7}{7 & -1}.
	\end{eqnarray*}
	The pair $  (2599,357)$ is also a solution to the above Pell equation, and the corresponding matrices are:
	\begin{eqnarray*}
		\mat{50 & 357}{357 & 2549} \mbox{ and } \mat{-2549 & 357}{357 & -50}.
	\end{eqnarray*}
	We would like to remark also that
	\begin{eqnarray*}
		M_{o}^{2} = \mat{50 & 357}{357 & 2549}.
	\end{eqnarray*}
	In fact, $M_{o}$ is one of the two fundamental automorphisms of $f$.
\end{example}

Note that the map $W \mapsto f_{W}$ is infinite to one because any indefinite binary quadratic form has infinite automorphism group. Any matrix in the automorphism group of $f$ maps onto $f$.

Let $\mathcal{D}:=\{ d\in \Z_{>0} \colon d \equiv 0,1   \,\, (\mbox{mod }4), \,  d \mbox{ is not a square}\}$. Recall the following:

\begin{proposition}[{\cite{sarnak/reciprocal/geodesics}}]
	There is a bijection between the set of conjugacy classes of primitive hyperbolic elements in $\psl$ and the set of classes of primitive binary quadratic forms of discriminant $\Delta \in \mathcal{D}$; where a hyperbolic element is called primitive if it is not a power of another hyperbolic element.
\end{proposition}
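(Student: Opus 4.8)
The plan is to promote the two explicit constructions already available in this section — the map $M\mapsto f_M$ of equation \eqref{eq:from/matrices/to/bqfs} and the passage $f\mapsto M_f$ from a form to a generator of $\aut{f}$ — to a pair of mutually inverse bijections between the two sets of classes. First I would check that $M\mapsto f_M$ carries primitive hyperbolic elements to primitive indefinite forms of discriminant in $\mathcal{D}$: the form is primitive by the $\gcd$-normalization built into \eqref{eq:from/matrices/to/bqfs}, and a short computation using $\det M=1$ gives $\Delta(f_M)=(\mathrm{tr}(M)^2-4)/g^2$ with $g=\gcd(q,s-p,r)$. Hyperbolicity forces $|\mathrm{tr}(M)|\ge 3$, so this is positive and $\equiv 0,1\pmod 4$, and it is non-square because $\mathrm{tr}(M)^2-4$ lies strictly between the consecutive squares $(\mathrm{tr}(M)-1)^2$ and $\mathrm{tr}(M)^2$. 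The geometric heart is that the roots of $f_M$ are exactly the two fixed points of $M$ on the ideal boundary of $\H$; since conjugation $M\mapsto X^{-1}MX$ moves these fixed points by $X^{-1}$ and $\mathrm{tr}$ is conjugation invariant, one gets the equivariance $f_{X^{-1}MX}=X\ccdot f_M$, so the assignment descends to a well-defined map $\Phi$ from conjugacy classes of primitive hyperbolic elements to classes of primitive forms.

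Next I would build the candidate inverse. Since $\aut{f}\simeq\Z$ for indefinite $f$, sending $[f]$ to the conjugacy class of a generator $M_f$ is well defined once one notes that equivalent forms have conjugate automorphism groups (the form-side counterpart of Proposition \ref{conjugatesubgroups}). The crucial point is that the two notions of primitivity agree. If $M_f=N^k$ for a hyperbolic $N$, then $N$ shares its axis with $M_f$, hence $N\in\aut{f}=\langle M_f\rangle$, forcing $k=\pm1$, so $M_f$ is a primitive hyperbolic element; conversely any primitive hyperbolic $M$ lies in $\aut{f_M}=\langle M_{f_M}\rangle$ and primitivity again forces $M=M_{f_M}^{\pm1}$, i.e. $M$ is a fundamental automorphism of $f_M$. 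Both implications are the same computation, exploiting that elements with a common axis lie in a common infinite cyclic subgroup of $\psl$.

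Finally I would verify that the two composites are the identity. Going $[f]\mapsto[M_f]\mapsto[f_{M_f}]$, the form $f_{M_f}$ has the same roots and the same content as $f$, so $f_{M_f}=\pm f$; choosing among the two fundamental automorphisms $M_f,M_f^{-1}$ the one whose trace-sign matches normalizes this to $f_{M_f}=f$. Going $[M]\mapsto[f_M]\mapsto[M_{f_M}]$, primitivity gives $M_{f_M}=M^{\pm1}$, and the sign $\sgn(p+s)$ recorded in \eqref{eq:from/matrices/to/bqfs} distinguishes $M$ from $M^{-1}$ (equivalently $f_M$ from $-f_M=f_{M^{-1}}$), pinning $M_{f_M}$ to be conjugate to $M$ rather than to $M^{-1}$. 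I expect the bookkeeping of orientation to be the main obstacle: one must check throughout that the trace-sign convention in \eqref{eq:from/matrices/to/bqfs} is exactly what pairs a primitive hyperbolic element (rather than its inverse) with a form (rather than its negative), and that the argument ``two hyperbolic elements sharing an axis generate a common cyclic group'' is applied correctly in $\psl$, where the trace is only defined up to sign.
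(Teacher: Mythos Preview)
The paper does not supply a proof of this proposition: it is introduced with ``Recall the following'' and a citation to Sarnak, and the text moves immediately on to the next subsection. Your proposal is therefore not competing with an existing argument but filling in what the paper deliberately leaves to the reference, and your strategy --- promoting the explicit maps $M\mapsto f_M$ and $f\mapsto M_f$ already set up in the section to mutually inverse bijections on classes --- is exactly the natural way to do this with the ingredients provided.

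One point in your bookkeeping needs tightening. You write that ``the sign $\sgn(p+s)$ recorded in \eqref{eq:from/matrices/to/bqfs} distinguishes $M$ from $M^{-1}$,'' but $M$ and $M^{-1}$ have the \emph{same} trace, so that sign cannot separate them. The role of $\sgn(p+s)$ is only to make $f_M$ well defined on $\psl$ (invariant under $M\mapsto -M$). The distinction $f_{M^{-1}}=-f_M$ comes instead from the change $(r,s-p,-q)\mapsto(-r,p-s,q)$ in the entries. Consequently the correct way to remove the two-fold ambiguity in the inverse map is not via a ``trace-sign match'' but by selecting, among the two generators of $\aut{f}=\langle M_f\rangle$, the unique one satisfying $f_{M_f}=+f$ (the other gives $-f$). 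With that adjustment your verification that the two composites are the identity goes through, and the rest of your outline --- the discriminant computation, the non-square check via $(|\tau|-1)^2<\tau^2-4<\tau^2$, the equivariance through common fixed points, and the primitivity argument via shared axes --- is correct.
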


\subsection{Reduction Theory of Binary Quadratic Forms} 
\label{sec:reducetion/theory}

We say that an indefinite binary quadratic form $f= (A,B,C)$ is {\it reduced} if the geodesic in $\mathcal{H}$ connecting the two real fixed points of $W_{f}$, called the {\it axis} of $W_{f}$ and denoted by $\mathfrak{a}_{W_{f}}$, intersects with the standard fundamental domain of the modular group. Remark that this definition is equivalent to the one given by Gauss in \cite{disquisitiones}\footnote{Recall that Gauss defined a form to be reduced if $|\sqrt{\Delta} - 2 |A|| <  B<\sqrt{\Delta}$.}. The equivalence of the two definitions is folklore. 

The $\psl$ class of an indefinite binary quadratic form contains more than one reduced form as opposed to definite binary quadratic forms where the reduced representative is unique, see \cite[Section 6.8]{bqf/vollmer} or \cite[Section 5.6]{computational/nt/cohen} for further discussion. The classical reduction is the process of acting on a non-reduced form $f= (A,B,C)$ by the matrix
\begin{eqnarray*}
	\rho (f) = \mat{0 & 1}{1 & t (f)} = S  (RS)^{t (f)};
\end{eqnarray*}
where $$ t (f) = \left\{
\begin{array}{crcr}
	\sgn (c) \left\lfloor \frac{b}{2|c|}\right\rfloor							& \hbox{if} & |c| \geq \sqrt{\Delta}\\
	\sgn (c) \left\lfloor \frac{\sqrt{\Delta} + b}{2|c|}\right\rfloor		& \hbox{if} & |c| < \sqrt{\Delta}
\end{array}\right\},$$


\noindent and checking whether the resulting form is reduced or not. It is known that after finitely many steps one arrives at a reduced form, call $f_{o}$. Applying $\rho(f_{o})$ to $f_{o}$ produces again a reduced form. Moreover, after finitely many iterations one gets back $f_{o}$. And this set of reduced indefinite binary quadratic forms is called the cycle of the class.

Our aim is now to reveal the reduction method due to Gauss in terms of \c{c}arks. Recall that every edge of a \c{c}ark may be labeled with a unique coset of the corresponding subgroup. That is to say binary quadratic forms may be used to label the edges of the \c{c}ark by Corollary~\ref{cor:1/to/1/corr/between/directed/carks}. 

Given a hyperbolic element $W$ as a word in $R$, $R^{2}$ and $S$ we define the length of $W$, $\ell (W)$, to be the total number of appearances of $R$, $R^{2}$ and $S$. For instance for $W = RS R^{2}S  (RS)^{2}$, $\ell (W) = 8$.

\begin{lemma}
	Given an indefinite binary quadratic form  (reduced or non-reduced), $f$, let $W_{f}$ be a primitive hyperbolic element corresponding to $f$. Then
	\begin{eqnarray}
		\ell (W_{\rho (f)\ccdot f}) \leq \ell (W_{f}).
	\end{eqnarray}
	\label{lemma:length/of/reduced/form/is/minimal}
\end{lemma}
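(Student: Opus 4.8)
The plan is to turn the statement into a purely combinatorial inequality about conjugation in $\psl=\langle S\rangle\ast\langle R\rangle$ and then to read it off from the action of the relevant hyperbolic element on the Farey tree $\F$. \emph{Step 1: reduce to conjugation.} First I would record that
\[
W_{\rho(f)\ccdot f}=\rho(f)^{-1}\,W_f\,\rho(f)
\]
up to replacing $W_f$ by $W_f^{-1}$. Indeed, the action on forms is by congruence, $W_{U\ccdot f}=U^{t}W_fU$, so $V\in\aut{U\ccdot f}$ exactly when $UVU^{-1}\in\aut f$; hence $V\mapsto UVU^{-1}$ is an isomorphism $\aut{U\ccdot f}\xrightarrow{\sim}\aut f$ carrying a fundamental automorphism to a fundamental automorphism, i.e. $W_{U\ccdot f}=U^{-1}W_f^{\pm1}U$. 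Taking $U=\rho(f)$ and using that $\ell(W)=\ell(W^{-1})$, the Lemma becomes $\ell(\rho(f)^{-1}W_f\rho(f))\le \ell(W_f)$, a statement about conjugating the single word $W_f$ by $\rho(f)=S(RS)^{t(f)}$.

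\emph{Step 2: interpret $\ell$ on the tree.} The edges of $\F$ are exactly the elements of $\psl$, and two edges are adjacent precisely when they differ by right multiplication by one of $S,R,R^{2}$; hence $\ell(W)$ is the combinatorial edge-distance $d_{\F}(\{I\},\{W\})=d_{\F}(\{I\},W\ccdot\{I\})$, the displacement of the base edge under $W$. For the hyperbolic $W_f$ the lift of the spine of $\ch_{W_f}$ is the translation axis of $W_f$ in $\F$, with translation length equal to the combinatorial spine length $\ell_c(\ch_{W_f})$. The displacement formula for a hyperbolic tree isometry then gives
\[
\ell(W_f)=\ell_c(\ch_{W_f})+2\,d_{\F}\!\bigl(\{I\},\mathrm{axis}(W_f)\bigr).
\]
Since $\mathrm{axis}(\rho^{-1}W_f\rho)=\rho^{-1}\ccdot\mathrm{axis}(W_f)$, one has $d_{\F}(\{I\},\mathrm{axis}(\rho^{-1}W_f\rho))=d_{\F}(\{\rho(f)\},\mathrm{axis}(W_f))$, so the inequality is equivalent to the geometric assertion that the step $\{I\}\mapsto\{\rho(f)\}$ does not move the base edge farther from the spine.

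\emph{Step 3: reduction as a move toward the spine.} Here I would use the dictionary: $f$ reduced $\Leftrightarrow$ its base edge lies on the spine $\Leftrightarrow$ $W_f$ is cyclically reduced $\Leftrightarrow$ $\ell(W_f)$ is minimal in its conjugacy class and equal to $\ell_c(\ch_{W_f})$. If $f$ is reduced then $\{I\}$ lies on the axis, $\rho(f)$ cyclically permutes the cyclically reduced normal form of $W_f$, and $\ell$ is unchanged. If $f$ is not reduced, write $W_f=P\,K\,P^{-1}$ in normal form with $K$ cyclically reduced and $P\neq 1$, so that $d_{\F}(\{I\},\mathrm{axis}(W_f))=\ell(P)>0$; I would then show that $\rho(f)$ moves $\{I\}$ along the branch toward its nearest point on the axis without passing beyond it, so that $d_{\F}(\{\rho(f)\},\mathrm{axis}(W_f))\le \ell(P)$.

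The main obstacle is exactly this last verification: that Gauss' arithmetically defined integer $t(f)$ — with its two cases according as $|c|\ge\sqrt{\Delta}$ or $|c|<\sqrt{\Delta}$, and the floor — realizes precisely one non-overshooting step toward the axis in $\F$. I expect to handle it by identifying the attracting fixed point of $W_f$, which is the boundary point of $\F$ toward which the axis escapes, together with its (minus-)continued-fraction expansion: the partial quotients are read off by the successive syllables of $P$, and $t(f)$ is exactly the amount consumed by one reduction step, the floor guaranteeing that the image edge lands on the branch no farther from the axis than before. Feeding $d_{\F}(\{\rho(f)\},\mathrm{axis}(W_f))\le d_{\F}(\{I\},\mathrm{axis}(W_f))$ back into the displacement formula of Step~2 then yields the Lemma, with equality precisely when $f$ is already reduced.
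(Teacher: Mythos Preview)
The paper states this lemma without proof, so there is no argument in the text to compare yours against.

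Your Steps~1 and~2 are correct and are exactly the reformulation the paper's framework invites. The conjugation identity $W_{U\ccdot f}=U^{-1}W_f^{\pm1}U$ is immediate from the congruence action, and identifying $\ell(W)$ with the edge--distance $d_{\F}(\{I\},\{W\})$ in the Bass--Serre tree, together with the hyperbolic displacement formula $\ell(W_f)=\ell_c(\ch_{W_f})+2\,d_{\F}(\{I\},\mathrm{axis}(W_f))$, cleanly reduces the lemma to the geometric assertion that one reduction step does not move the base edge farther from the axis (equivalently, from the spine of the \c{c}ark). This is precisely how the paper wants the reader to think about reduction.

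Step~3, however, is where all the content sits, and what you have there is a plan rather than a proof. Two remarks. First, your phrasing ``along the branch toward its nearest point on the axis without passing beyond it'' is slightly too strong: in the regime $|c|<\sqrt{\Delta}$ a single Gauss step already lands on a reduced form, so $\{\rho(f)\}$ may lie \emph{on} the axis but past the nearest-point projection $\{P\}$, i.e.\ $\rho(f)$ need not be a prefix of $P$. This is harmless for the inequality $d_{\F}(\{\rho(f)\},\mathrm{axis})\le d_{\F}(\{I\},\mathrm{axis})$ that you actually need, but your write-up should claim only that. Second, the link between Gauss' arithmetically defined $t(f)$ --- with its floor and its two cases --- and the combinatorics of the prefix $P$ is the entire substance of the lemma, and your continued-fraction idea is the standard and correct mechanism: the direction from $\{I\}$ toward the axis is dictated by the (minus-)continued-fraction expansion of the relevant root of $f(x,1)=0$, and one reduction step peels off exactly one partial quotient. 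Carrying this out explicitly --- covering both signs of $t(f)$ and the semi-reduced case where $\{I\}$ already lies on the spine but at a non-reduced edge --- is routine but has to be written down before the argument counts as complete.
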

Let us assume from now on that our \c{c}arks are embedded into an annulus, with an orientation which we will assume to be the usual one.\footnote{Although theoretically unnecessary, the choice of an orientation will simplify certain issues. For instance, we shall see that inversion in the class group is reflection with respect to spine.} In addition we also introduce the following shorter notation for our \c{c}arks: in traversing the spine (in either direction) if there are $n$ consecutive Farey branches in the direction of the same boundary component, then we denote this as a single Farey component and write $n$ on the top of the corresponding branch, see Figure~\ref{fig:cark/and/its/short/form}. We will call such \c{c}arks \emph{weighted}. 


\begin{definition}
	Let $\ch$ be a weighted \c{c}ark. Edges of the spine are called \emph{semi-reduced}. In particular, an edge on the spine of $\ch$ is called \emph{reduced} if and only if it is on the either side of a Farey component which is in the direction of the inner boundary component.
	\label{defn:reduced/edge}
\end{definition}

\begin{figure}[h!]
	\begin{subfigure}{5cm}
		\centering
		\includegraphics[scale=0.35]{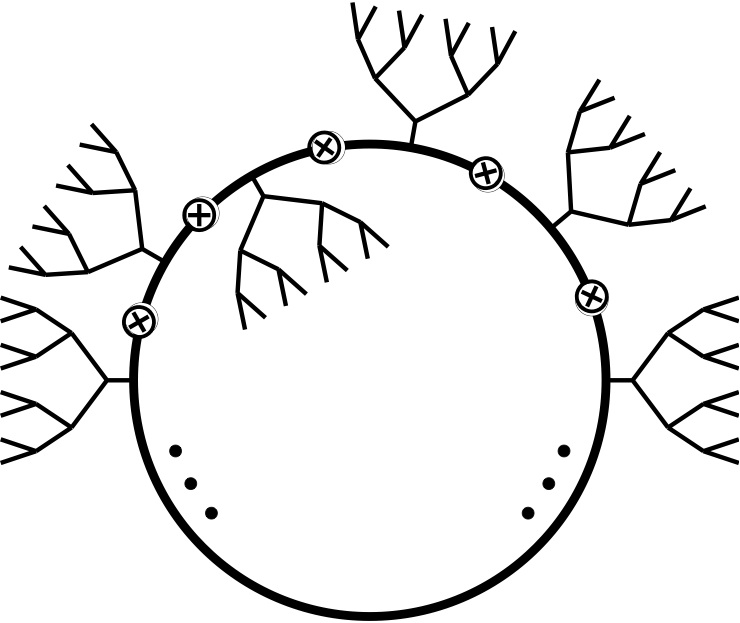}
	\end{subfigure}
	\begin{subfigure}{5cm}
		\centering
		\includegraphics[scale=0.39]{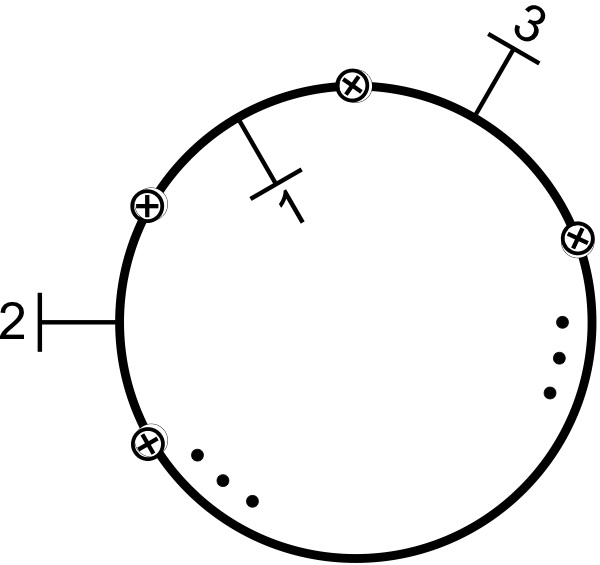}
	\end{subfigure}
	\caption{A \c{c}ark and its short form. }
\end{figure}

\noindent Remark that as we have fixed our orientation to be the usual one, there is no ambiguity in this definition. In addition note that semi-reduced edges are in one to one correspondence  between the forms $f =  (A,B,C)$ in a given class for which $AC<0$. We are now ready to describe reduction theory of binary quadratic forms in terms of \c{c}arks. We have seen that multiplication by the matrix $\rho (f)$ is, in general, the process of moving the base edge of the \c{c}ark to the spine as a result of Lemma~\ref{lemma:length/of/reduced/form/is/minimal}. However, this is not enough. That is, not every edge on the spine corresponds to a reduced form. Reduced forms correspond to edges where the Farey branches switch from one boundary component to the other. More precisely, we have:

\begin{theorem}
	Reduced forms in an arbitrary indefinite binary quadratic form class $[f]$ are in one to one correspondence between the reduced edges of the \c{c}ark corresponding to the given class.
	\label{thm:reduced/edges/reduced/bqfs}
\end{theorem}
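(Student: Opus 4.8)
The plan is to push everything through the dictionary supplied by Corollary~\ref{cor:1/to/1/corr/between/directed/carks} and Corollary~\ref{cor:oriented/carks/with/a/base/edge/vs/bqf}. Once the orientation is fixed, the edges of the \c cark $\ch$ of the class $[f]$ are in bijection with the forms of $[f]$: an edge $e$ carries the form $g_e$ whose fundamental automorphism is the conjugate of $M$ that transports the base edge to $e$. Under this dictionary the axis $\mathfrak{a}_{W_{g_e}}$ is the corresponding $\psl$-translate of $\mathfrak{a}_{W_f}$, and $g_e$ is reduced precisely when that translated axis meets the standard fundamental domain $\Omega$ of the modular group. So the theorem becomes the assertion that the translates of $\mathfrak{a}_{W_f}$ meeting $\Omega$ are carried by exactly the reduced edges of Definition~\ref{defn:reduced/edge}. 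I would prove the two inclusions separately and then match cardinalities over one period of the spine. It is worth noting already that fixing the orientation is exactly what makes $g_e$ (in particular the sign of its middle coefficient) well defined, and hence what will force the word \emph{inner} in Definition~\ref{defn:reduced/edge}.

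First I would show that every reduced form sits on the spine. If $g_e=(A,B,C)$ is reduced then $B<\sqrt{\Delta}$ forces $B^{2}<\Delta=B^{2}-4AC$, whence $AC<0$. By the remark following Definition~\ref{defn:reduced/edge}, the forms with $AC<0$ are precisely those carried by the semi-reduced edges, i.e.\ by the edges of the spine. Thus the edge of every reduced form already lies on the spine, and it remains only to single out, among the spine edges, those whose form satisfies the sharper Gauss inequality $|\sqrt{\Delta}-2|A||<B$.

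The heart of the matter is to read this sharper inequality off the local shape of the spine. Traversing the spine in the fixed orientation, each $\bull$-vertex carries a single Farey branch pointing toward the inner or the outer boundary of the annulus according as the axis turns by $R$ or by $R^{2}$, which is exactly the $0/1$ datum of the bracelet; a maximal run of branches toward the same boundary is one Farey component. Passing from the edge carrying $(A,B,C)$ to its spine neighbours replaces $(A,B,C)$ by the forms produced by $S,R,R^{2}$, and one tracks how $(|A|,B,|C|)$ evolves along a run. The claim to establish is that $|\sqrt{\Delta}-2|A||<B<\sqrt{\Delta}$ holds exactly at the two edges flanking a Farey component pointing toward the inner boundary, and fails at every interior edge of a component and at the edges flanking an outer component; equivalently, that the reduction operator $\rho$ of Lemma~\ref{lemma:length/of/reduced/form/is/minimal} advances the base edge along the spine by precisely one Farey component, landing on the next reduced edge. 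Identifying the arithmetic reduced locus with the combinatorial switch between outer and inner runs is the main obstacle: it requires pinning down how $\Omega$ is situated relative to the Farey triangles crossed by $\mathfrak{a}_{W_f}$, and in particular which of the two boundary circles the cusp of $\Omega$ faces, which is the point where the inner boundary in Definition~\ref{defn:reduced/edge} is forced.

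Granting this, the correspondence follows. The map sending a reduced form to its edge is well defined and injective by the dictionary, lands in the reduced edges by the previous two paragraphs, and is surjective because every reduced edge flanks an inner Farey component and hence carries a form satisfying both $AC<0$ and the sharp inequality. Finally I would record the consistency of the counts: the reduced forms of $[f]$ form a single $\rho$-cycle whose length equals the number of partial quotients in one period of the associated minus continued fraction, which is exactly the number of Farey components of the spine; since the inner and outer components alternate and are therefore equinumerous, the two-per-inner-component tally of reduced edges likewise equals the number of components, confirming the bijection globally.
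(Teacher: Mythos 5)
You have reproduced the paper's own route, at essentially the paper's own level of completeness. The paper proves this theorem by combining (i) the dictionary between forms in $[f]$ and edges of the \c{c}ark (change of base edge $=$ the $\psl$-action on forms), (ii) the remark following Definition of reduced edges that semi-reduced edges correspond exactly to the forms with $AC<0$, and (iii) the lemma stated immediately after the theorem, asserting that the reduction operator $\rho(f)$ is transitive on the set of reduced edges of $\ch_f$ --- and the paper supplies no proof of that lemma. Your ``claim to establish'' (that Gauss's inequality $|\sqrt{\Delta}-2|A||<B<\sqrt{\Delta}$ holds exactly at the two edges flanking an inner Farey component, equivalently that $\rho$ advances the base edge along the spine by exactly one Farey component, landing on the next reduced edge) is precisely that lemma, so the step you honestly flag as ``the main obstacle'' is exactly the step the paper also omits; your first two paragraphs are the paper's items (i) and (ii). In this sense your attempt neither falls short of nor goes beyond the published argument.

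One factual slip in your closing consistency check: the length of the $\rho$-cycle of Gauss-reduced forms equals the (even) period length of the ordinary ``plus'' continued fraction, and it is \emph{this} number that equals the number of Farey components of the spine. The number of partial quotients in one period of the \emph{minus} (Zagier) continued fraction is a different invariant: it equals the number of Farey branches pointing toward one fixed boundary component (the sum of the weights on one side of the spine), and it counts Zagier-reduced forms, not Gauss-reduced ones. For example, for $\Delta=13$ the spine has two components of weights $(3,3)$ and two Gauss-reduced forms, while the minus continued fraction has period $(5,2,2)$ of length three. Your two misattributions cancel, so the equality you wanted (cycle length $=$ number of components) is true, but the intermediate identifications are wrong; in any case that paragraph is redundant, since once the flanking-edge claim is granted the dictionary already makes your map a bijection between reduced forms and reduced edges.
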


\noindent As we have remarked the action of $\psl$ on binary quadratic forms is equivalent to the change of base edge on the set of \c{c}arks. Hence the above Theorem is an immediate consequence of the following:

\begin{lemma}
	Let $\c{C}_{f}$ denote the \c{c}ark associated to an arbitrary indefinite binary quadratic form $f$. The reduction operator $\rho (f)$ is transitive on the set of reduced edges of $\c{C}_{f}$.
\end{lemma}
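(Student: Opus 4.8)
The plan is to show that the reduction map $f \mapsto \rho(f)\ccdot f$ acts on the finite, cyclically ordered set of reduced edges of $\c{C}_f$ as a one-step cyclic shift; transitivity of $\rho$ is then equivalent to this set forming a single orbit, which is immediate for a cyclic shift.

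First I would fix the combinatorial picture of the spine. By Definition~\ref{defn:reduced/edge} a reduced edge is one flanking a maximal Farey component pointing toward the inner boundary of the annulus. Since in the weighted \c cark consecutive Farey components necessarily alternate their direction, each boundary between two components flanks exactly one inner-pointing component, so the reduced edges are precisely the ``switch points'' of the spine and they form a finite cyclic sequence $e_1, e_2, \dots, e_m$, ordered once around the spine by the orientation fixed before Definition~\ref{defn:reduced/edge}. The discussion following Lemma~\ref{lemma:length/of/reduced/form/is/minimal} guarantees that $\rho$ keeps us on the length-minimal edges, i.e. on the spine, and the text already records that $\rho(f)\ccdot f$ is again reduced when $f$ is; thus $f \mapsto \rho(f)\ccdot f$ restricts to a self-map of $\{e_1,\dots,e_m\}$.

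Next I would analyze one reduction step geometrically. Writing $\rho(f)=S(RS)^{t(f)}$, I would read the factor $(RS)^{t(f)}$ as a walk along the spine that traverses exactly the one Farey component adjacent to the current reduced edge, and the leading $S$ as the flip across the $\tens$-vertex recording the change of direction at the following switch. The crux is to identify the integer $t(f)$, given by the two cases according to whether $|c|\ge\sqrt{\Delta}$ or $|c|<\sqrt{\Delta}$, with the weight of that Farey component, equivalently with the corresponding partial quotient of the ``minus'' continued fraction attached to the spine. Granting this identification, $\rho(f)\ccdot f$ lands on $e_{i+1}$ whenever $f$ labels $e_i$, i.e. the reduction map is the cyclic shift $e_i \mapsto e_{i+1}$ with indices modulo $m$.

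Transitivity then follows at once: the cyclic group generated by the shift $e_i \mapsto e_{i+1}$ is transitive on the finite cyclic set $\{e_1,\dots,e_m\}$, and iterating $\rho$ returns to the initial reduced edge after $m$ steps, which is exactly the cycle of the class. I expect the main obstacle to be the middle step, namely verifying that $t(f)$ equals the weight of the adjacent component and that $S(RS)^{t(f)}$ lands on the very next switch rather than overshooting or stopping short. This demands careful bookkeeping of the two defining cases for $t(f)$, of the tags $R$ versus $R^2$ (equivalently $0$ versus $1$) of the $\bullet$-vertices crossed, and of the orientation conventions; once this local matching is established the global transitivity statement is formal.
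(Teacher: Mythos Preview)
The paper does not supply a proof of this lemma; it is stated immediately after the classical description of Gauss' cycle (``Applying $\rho(f_o)$ to $f_o$ produces again a reduced form \dots\ after finitely many iterations one gets back $f_o$'') and is in effect that classical statement rephrased in the \c{c}ark vocabulary, with the details left to the reader or to the references.

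Your outline is the right one and would furnish an honest proof. The identification of reduced edges with the switch points of the weighted spine is exactly what Definition~\ref{defn:reduced/edge} gives once one notes that consecutive weighted Farey components alternate direction, and the reduction of transitivity to ``$\rho$ is the cyclic shift $e_i\mapsto e_{i+1}$'' is the correct strategy. The step you single out as the crux---matching $t(f)$ with the weight of the adjacent block so that $S(RS)^{t(f)}$ crosses exactly one Farey component---is indeed the only substantive computation, and it is precisely the standard continued-fraction reading of one Gauss reduction step; once done, everything else is formal.

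One caveat for when you carry out the bookkeeping: the matrix $\rho(f)=\left(\begin{smallmatrix}0&1\\1&t(f)\end{smallmatrix}\right)$ has determinant $-1$, so the paper's equation $\rho(f)=S(RS)^{t(f)}$ holds only up to the sign ambiguity in the action on forms. In practice this means that, depending on whether the block you are traversing points toward the inner or the outer boundary, the word you walk along is $(RS)^n$ or $(R^2S)^n$, and correspondingly $t(f)$ is positive or negative. Keeping this straight is part of the ``careful bookkeeping of the two defining cases for $t(f)$'' that you already anticipate.
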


Let us give some examples:

\begin{example}
	Let us consider the form $f =  (7,33,-15)$. It is easy to check that $f$ is reduced. $W_{f} =  (R^{2}S)^2 \,  (RS)^2 \,  R^2 S \, RS \,  (R^{2}S)^{7} \,  (RS)^{5} = \mat{-38 & -195}{ -91 & -467}$. The trace of the class is $-505$. By Gauss' theory the class $[f]$ is an element in the quadratic number field with discriminant $1509$.
\end{example}

\begin{figure}
	\centering
	\includegraphics[scale=0.23]{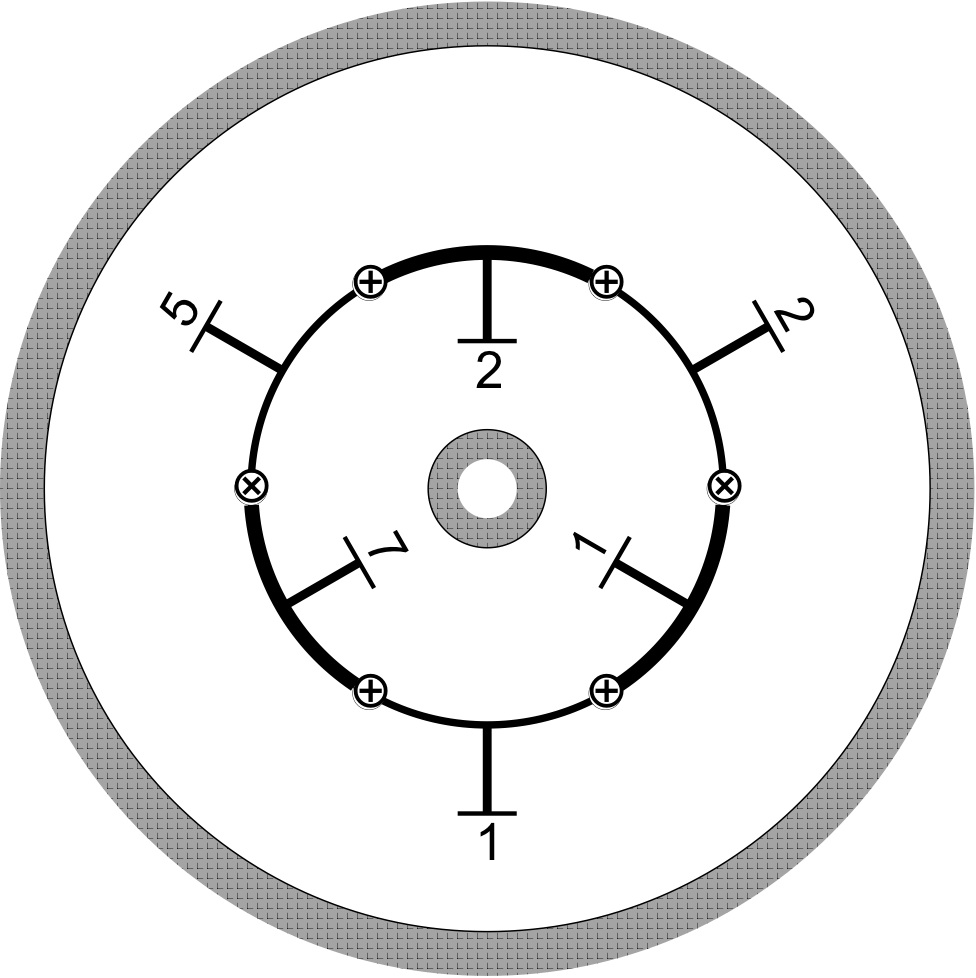}
	\caption{ \c{C}ark corresponding to the class represented by the form $ (7,33,-15)$. Bold edges are reduced.}
	\label{fig:cark/general/example}
\end{figure}

\begin{example}
	Let $\Delta = n^{2}+4n$ for some positive integer $n$. Then the identity in the class group is given by the \c{c}ark in Figure~16a and the corresponding form is $ (-n,n,1)$. If $\Delta = n^{2}+4$, then the identity is represented by the form $\frac{1}{n} (-n,n^{2},n) =  (1,n,-1)$. The corresponding \c{c}ark has two Farey branches, see Figure~16b.
	\label{ex:identities/having/two/Farey/components}
\end{example}

\begin{figure}[h!]
	\begin{subfigure}[]{4.5cm}
		\centering
		\includegraphics[scale=0.23]{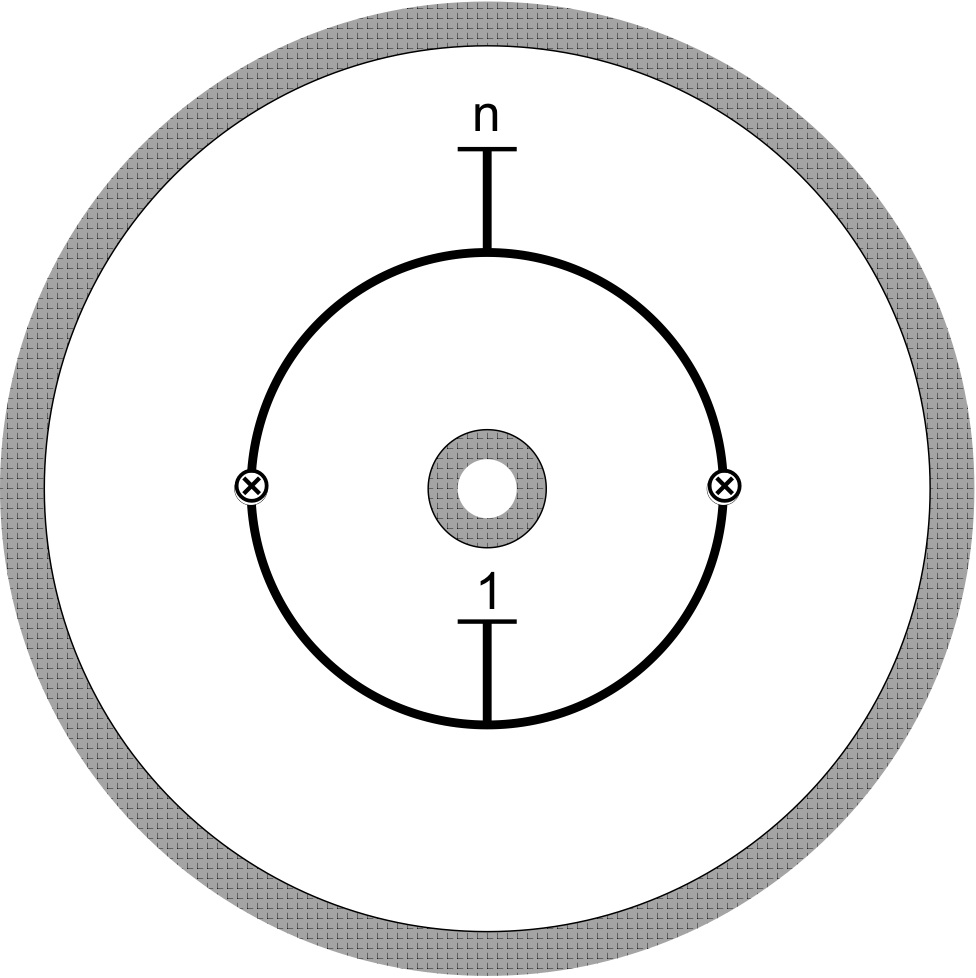}
		\caption{\scriptsize (a) Identity for $\Delta = n^{2} + 4n$.}
		\label{fig:identity/n2}
	\end{subfigure}
	\qquad
	\begin{subfigure}[]{4cm}
		\centering
		\includegraphics[scale=0.23]{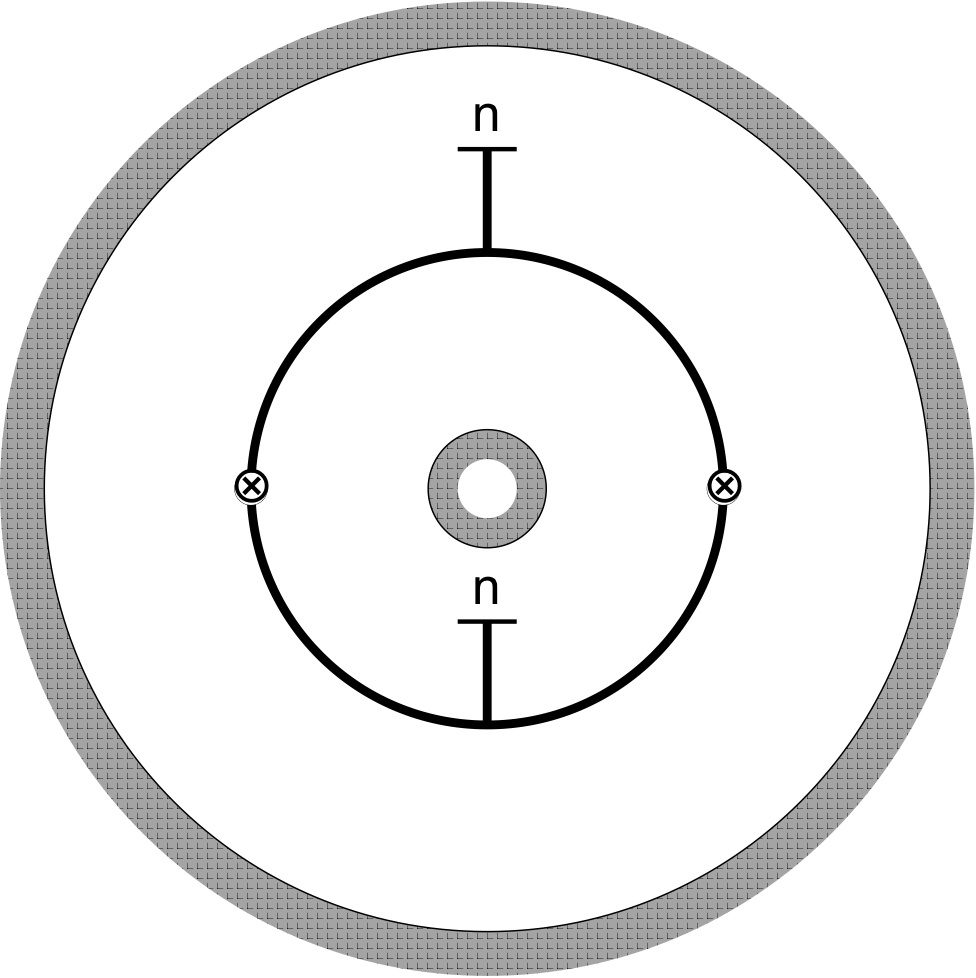}
		\caption{\scriptsize (b) Identity for $\Delta = n^{2}+4$.}
		\label{fig:identity/5n2}
	\end{subfigure}
	\caption{ \ }
	\label{fig:identities}
\end{figure}

\noindent However, one has to admit that there are very complicated \c{c}arks representing the identity of the class group. For instance, the \c{c}ark corresponding to the form $ (-7,23,16)$ has 42 Farey branches.

\subsection{Ambiguous and Reciprocal forms}

Let us now discuss certain symmetries of a \c{c}ark. For a given \c{c}ark $\ch$ let $\ch^{r}$ be the \c{c}ark which is the mirror image of $\ch$ about any line passing through the `center" of the spine (assuming that the Farey components coming out of the spine in its shorter notation that we have introduced is evenly spaced). It is easy to see that both ideal classes represented by the two \c{c}arks $\ch$ and $\ch^{r}$ have the same discriminant. A straightforward computation leads to the following:

\begin{proposition}
	Given a \c{c}ark $\ch$ the binary quadratic form class represented by $\ch^{r}$ is inverse of the class represented by $\c{C}$.
	\label{propn:inverse/carks}
\end{proposition}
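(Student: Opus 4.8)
The plan is to realise the mirror image $\ch^{r}$ as a çark $\ch_{M^{r}}$ attached to a matrix $M^{r}$ built explicitly from a generator $M$ of $\ch$, to compute its form via (\ref{eq:from/matrices/to/bqfs}), and to recognise that form as the opposite (hence inverse) form. First I would fix a hyperbolic $M=\mat{p & q}{r & s}\in\psl$ with $\ch=\ch_{M}=\langle M\rangle\backslash\F$, so that by (\ref{eq:from/matrices/to/bqfs}) the class attached to $\ch$ is represented by $f_{M}=\frac{\sgn (p+s)}{\gcd (q,s-p,r)} (r,\,s-p,\,-q)= (A,B,C)$. The real fixed points $\xi_{\pm}$ of $M$ are the two roots of $f_{M} (z,1)=Az^{2}+Bz+C$, and the geodesic joining them, the axis $\mathfrak{a}_{W_{f}}$, is the geometric realisation of the spine. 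The geometric input I would use is that the mirror symmetry in the statement is induced by the reflection $\sigma\colon z\mapsto -\bar z$ across the axis of symmetry of the standard fundamental domain. Since that domain, and hence the topological Farey tree $\F_{top}$, is invariant under $\sigma$, this reflection descends to the annulus $\langle M\rangle\backslash\H$ and carries $\ch_{M}$ onto its mirror image $\ch^{r}$.

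Next I would turn this into arithmetic. Because $\sigma$ sends a hyperbolic element to the one whose fixed points are $-\xi_{\pm}$, after restoring the fixed orientation of the annulus (equivalently, after the spine-reversal $M\mapsto M^{-1}$ forced by $\sigma$ being orientation-reversing) the generator of $\ch^{r}$ becomes the anti-transpose $M^{r}:=\mat{s & q}{r & p}$, which again lies in $\psl$ and has the same trace as $M$. Applying (\ref{eq:from/matrices/to/bqfs}) to $M^{r}$ gives $f_{M^{r}}=\frac{\sgn (p+s)}{\gcd (q,\,p-s,\,r)} (r,\,p-s,\,-q)= (A,-B,C)$, that is, $f_{M}$ with the sign of its middle coefficient reversed; equivalently $f_{M^{r}} (x,y)=f_{M} (-x,y)$, reflecting the roots $\xi_{\pm}\mapsto-\xi_{\pm}$ as expected. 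It is classical that the opposite form $ (A,-B,C)$ represents the inverse class $[f_{M}]^{-1}$; indeed $S\ccdot (A,B,C)= (C,-B,A)$, whence $ (A,-B,C)\sim (C,B,A)$, a representative of $[f_{M}]^{-1}$ (see \cite{bqf/vollmer}, \cite{computational/nt/cohen}). Hence $[\ch^{r}]=[f_{M^{r}}]=[f_{M}]^{-1}=[\ch]^{-1}$, as claimed.

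The hard part will be the bookkeeping of orientations, which is the only genuinely delicate point. Since $\sigma$ is orientation-reversing it flips the cyclic ribbon ordering at every $\bullet$-vertex and reverses the induced direction of the spine; a direct computation from (\ref{eq:from/matrices/to/bqfs}) shows that reversing the spine, i.e. passing from $M$ to $M^{-1}$, sends $f_{M}$ to $-f_{M}=f_{M^{-1}}$. I must therefore verify that this global sign is absorbed by the convention fixing the orientation of the ambient annulus, so that the undirected çark $\ch^{r}$ is represented by $ (A,-B,C)$ and not by $- (A,-B,C)= (-A,B,-C)$, the latter being what bare conjugation by $\sigma$ produces. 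Concretely this amounts to checking that the composite ``reflect, then restore the standard orientation of the annulus'' is exactly the anti-transpose $M\mapsto M^{r}$, and that the resulting class is independent of the initial choice of base edge and of spine orientation. Once the reflection is pinned to this arithmetic operation, identifying $ (A,-B,C)$ with the inverse class is routine.
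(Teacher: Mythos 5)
Your endpoint is correct, and since the paper prints no argument for this proposition (it is introduced only with ``a straightforward computation leads to the following''), your outline is essentially the computation the authors had in mind: realize $\ch^{r}$ by a generator built from $M$, apply (\ref{eq:from/matrices/to/bqfs}), and recognize $(A,-B,C)\sim(C,B,A)$ as the inverse class. Your arithmetic is all correct: $f_{M^{r}}=(A,-B,C)$ for the anti-transpose $M^{r}=\mat{s & q}{r & p}$, $f_{M^{-1}}=-f_{M}$, and bare $\sigma$-conjugation yields $(-A,B,-C)$. However, as written the proposal has a genuine gap exactly at its load-bearing step, plus one false supporting claim. The false claim: $\sigma\colon z\mapsto-\bar z$ does \emph{not} descend to a self-map of $\langle M\rangle\backslash\H$; writing $J=\mat{-1 & 0}{0 & 1}$, one has $\sigma M\sigma^{-1}=JMJ$, which in general does not lie in $\langle M\rangle$, so $\sigma$ only induces a map $\langle M\rangle\backslash\H\to\langle JMJ\rangle\backslash\H$. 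The gap: whether the mirror \c{c}ark $\ch^{r}$ is generated by the bare conjugate $JMJ$ (giving $(-A,B,-C)$) or by $JM^{-1}J=M^{r}$ (giving $(A,-B,C)$) is the entire content of the proposition --- it is precisely what separates $\ch^{r}$ from the other reflection $\ch^{m}$, which the paper later asserts represents $(-A,B,-C)$ --- and you explicitly defer this verification (``I must therefore verify\dots'') instead of carrying it out. A proof that leaves exactly this to be checked has not yet proved the statement.

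The missing step closes quickly with the paper's own bracelet calculus, and you should finish it that way rather than through the annulus geometry. Write $M\sim R^{\epsilon_{1}}S\cdots R^{\epsilon_{k}}S$ with $\epsilon_{i}\in\{1,2\}$, so the directed \c{c}ark carries the cyclic word $(\epsilon_{1},\dots,\epsilon_{k})$. The reflection defining $\ch^{r}$ fixes the two boundary components of the annulus, hence leaves each letter (the side to which each Farey branch points) unchanged, while reversing the reading direction of the spine: $\ch^{r}$ carries the reversed word $(\epsilon_{k},\dots,\epsilon_{1})$. On matrices, inversion produces the reversed \emph{and} swapped word, since $S^{-1}=S$ and $R^{-1}=R^{2}$, whereas conjugation by $J$ produces the swapped word with the order intact, since $JSJ=S$ and $JRJ=SR^{2}S$. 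Composing, the reversed word is the word of $JM^{-1}J=M^{r}$, not of $JMJ$; this pins $\ch^{r}$ to the anti-transpose independently of any convention about which boundary is inner or which letter is $R$. With that established, your computation $f_{M^{r}}=(A,-B,C)\sim(C,B,A)$ and the classical identification of the opposite form with the inverse class complete the proof.
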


\begin{example}
	Let us consider the form $f =  (-2377, 10173, 1349)$ having discriminant $116316221$. The form $g =  (-4027, 8915, 2287)$ is an element in the ideal class represented by this form. The corresponding \c{c}arks are shown in Figure~\ref{fig:inverse/cark}. The forms are inverses of each other.
	\label{ex:inverse}
\end{example}

\begin{figure}[h!]
	\centering
	\begin{subfigure}{4cm}
		\centering
		\includegraphics[scale=0.23]{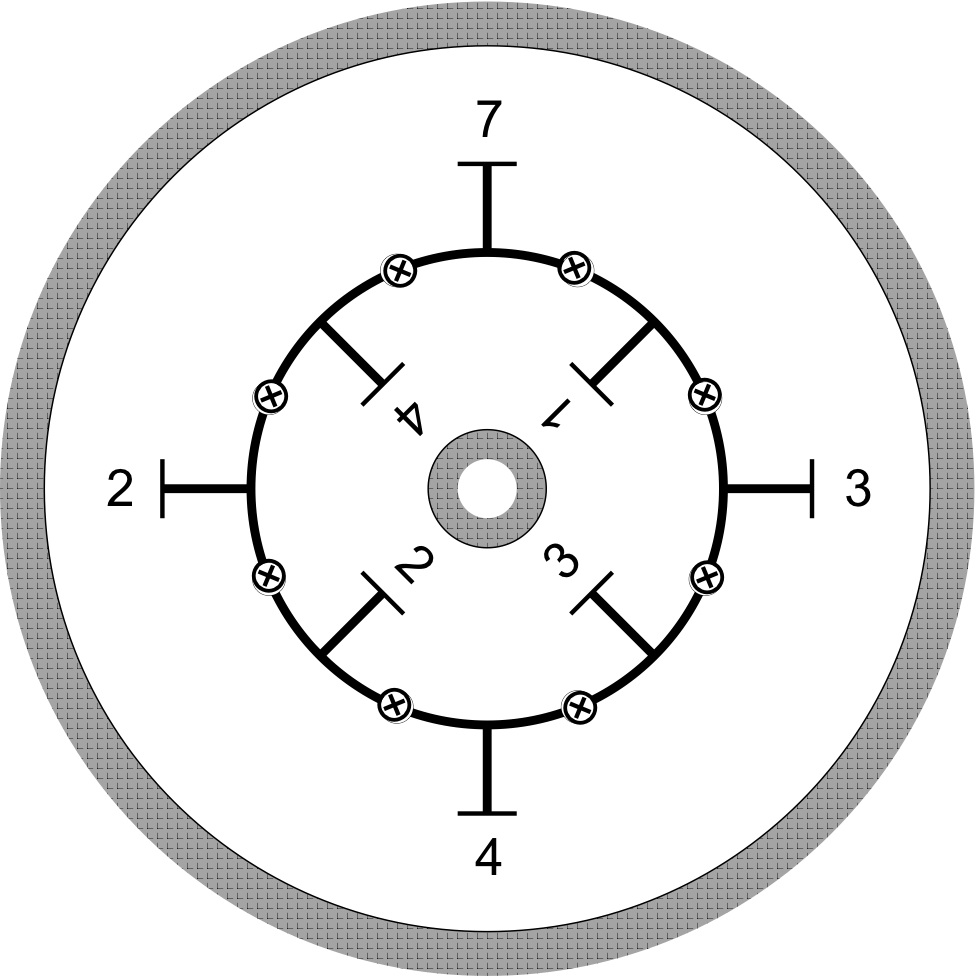}
		\caption{\scriptsize \c{C}ark corresponding to $f =  (-2377, 10173, 1349)$.}
	\end{subfigure}
	\begin{subfigure}{4cm}
		\centering
		\includegraphics[scale=0.23]{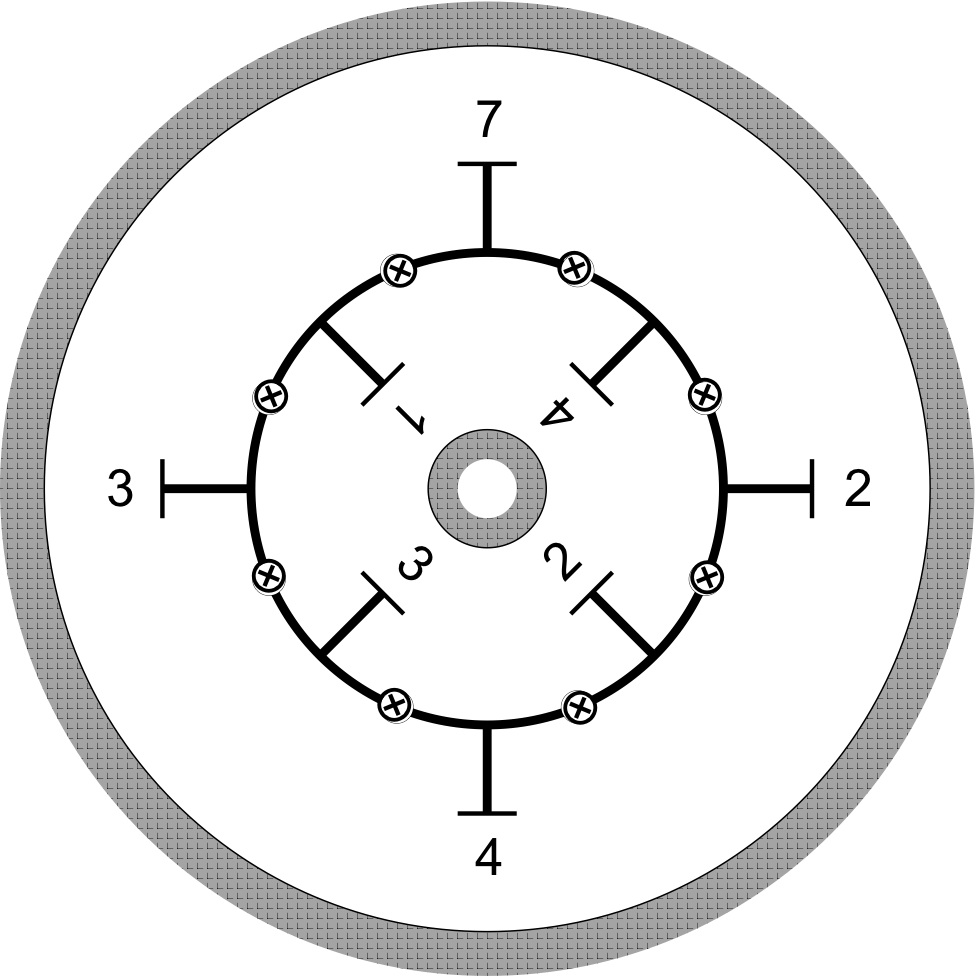}
		\caption{\scriptsize \c{C}ark corresponding to $f^{-1} =  (-4027, 8915, 2287)$.}
	\end{subfigure}
	\begin{subfigure}{4cm}
		\centering
		\includegraphics[scale=0.23]{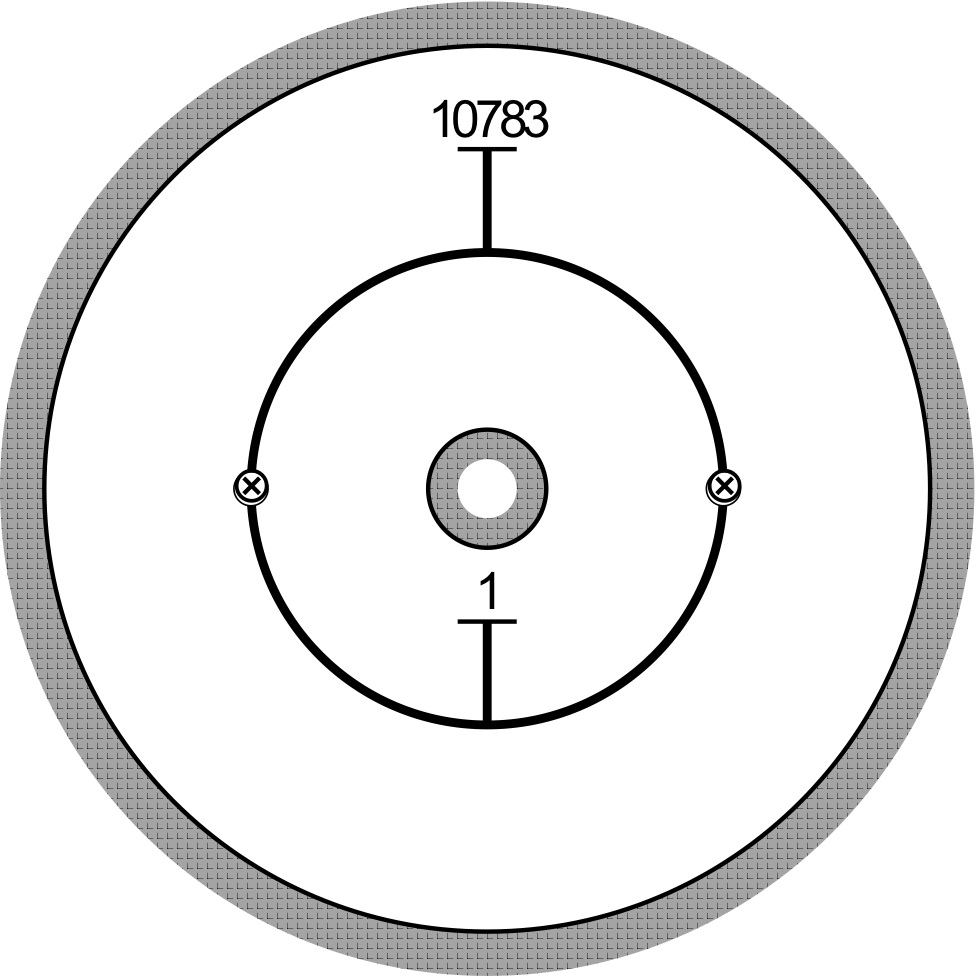}
		\caption{\scriptsize \c{C}ark of the product of $f \times f^{-1}$.}
	\end{subfigure}

	\caption{ Two \c{c}arks inverses of one another and their product.}
	\label{fig:inverse/cark}
\end{figure}

Recall that Gauss has defined a binary quadratic form to be ambiguous if it is equivalent to its inverse or equivalently if the corresponding equivalence class contains $ (a,ka,c)$ for some $a$, $c$ and $k$. Following Gauss, we define a \c{c}ark $\ch$ \emph{ambiguous} if $\ch$ and $\ch^{r}$ are isomorphic as \c{c}arks, or equivalently correspond to the same subgroup of $\psl$. So from Proposition~\ref{propn:inverse/carks} we deduce:

\begin{corollary}
	Ambiguous \c{c}arks correspond to ambiguous forms.
	\label{cor:ambiguous/carks}
\end{corollary}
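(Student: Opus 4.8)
The plan is to read this off from Proposition~\ref{propn:inverse/carks} together with the bijection between \c{c}arks and form classes recorded in Corollary~\ref{cor:1/to/1/corr/between/undirected/carks} and Corollary~\ref{cor:oriented/carks/with/a/base/edge/vs/bqf}. The statement is an equivalence, so I would establish both implications at once by measuring the two notions of ``ambiguous'' against a single reference point, namely inversion in the class group. Recall that, by definition, $\ch$ is ambiguous as a \c{c}ark exactly when $\ch\cong\ch^{r}$, while $f$ is ambiguous in the sense of Gauss exactly when $[f]=[f]^{-1}$; the whole corollary is then the assertion that these two conditions match up under the \c{c}ark--class dictionary.

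First I would fix a \c{c}ark $\ch$ representing the class $[f]$ and apply Proposition~\ref{propn:inverse/carks} to identify the class represented by the mirror image $\ch^{r}$ as the inverse class $[f]^{-1}$. Next I would invoke the one-to-one correspondence between (unoriented) \c{c}arks and classes: since the assignment taking a \c{c}ark to the class it represents is injective, two \c{c}arks are isomorphic if and only if they represent the same class. Applying this to the pair $\ch$, $\ch^{r}$ yields the chain $\ch\cong\ch^{r}\iff[f]=[f]^{-1}$, and by the two definitional reformulations above this is exactly ``$\ch$ ambiguous $\iff$ $f$ ambiguous,'' giving both directions simultaneously.

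The hard part is not in this short deduction but in making sure the underlying correspondence is being used legitimately, and this is really a matter of bookkeeping about the two symmetries a \c{c}ark can carry. A \c{c}ark admits both a reversal of the orientation of its spine (which on the generator is the passage $M\mapsto M^{-1}$) and a reflection across the spine (the operation $\ch\mapsto\ch^{r}$); only the latter implements inversion in the class group, and conflating the two would break the argument. I would therefore be explicit, before citing the bijection, that the correspondence in play is the unoriented one---\c{c}arks up to \c{c}ark isomorphism versus form classes---so that the class equality $[f]=[f]^{-1}$ and the isomorphism $\ch\cong\ch^{r}$ genuinely sit on opposite sides of one and the same bijection. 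With Proposition~\ref{propn:inverse/carks} already supplying the crucial compatibility, that $\ch^{r}$ reads off precisely the inverse class, the remaining steps are purely formal.
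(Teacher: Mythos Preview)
Your argument is correct and is exactly the route the paper takes: the paper states the corollary immediately after defining ambiguous \c{c}arks and simply says ``So from Proposition~\ref{propn:inverse/carks} we deduce,'' which is precisely your chain $\ch\cong\ch^{r}\iff[f]=[f]^{-1}$ read through the \c{c}ark--class dictionary.

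One small terminological slip in your bookkeeping paragraph: in this paper $\ch^{r}$ is the mirror image about a line through the center of the spine (i.e.\ reversal of the cyclic word), not reflection \emph{across} the spine; the latter is the separate operation $\ch\mapsto\ch^{m}$ introduced a few lines later. This does not affect your proof, since you correctly invoke Proposition~\ref{propn:inverse/carks} for $\ch^{r}$, but you should adjust the description so as not to conflate the two involutions.
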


\noindent In addition to all the examples considered in Example~\ref{ex:identities/having/two/Farey/components}, which represent ambiguous classes as they are of the form $ (a, ka, c)$, let us give one more example:

\begin{example}
	Consider the form $f =  (3,18,-11)$. The form is reduced and ambiguous as one immediately checks. The corresponding \c{c}ark is given in Figure~\ref{fig:ambiguous}
	\label{ex:ambiguous/carks}
\end{example}

\begin{figure}[h!]
	\centering
	\includegraphics[scale=0.23]{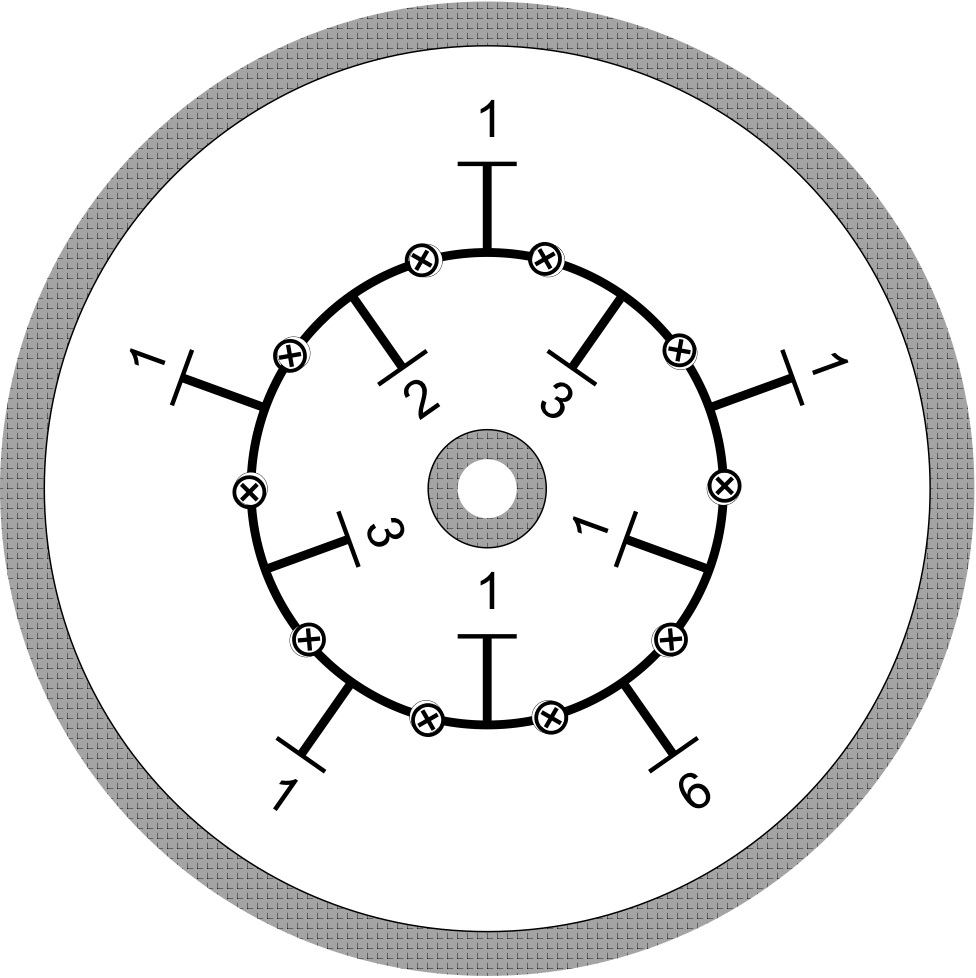}
	\caption{\c{C}ark corresponding to the ambiguous form $f =  (3,18,-11)$.}
	\label{fig:ambiguous}
\end{figure}

Let us now discuss ``rotational" symmetries. In Section~\ref{sec:necklace/bracelet} we defined a directed \c{c}ark with a base edge \emph{primitive} if and only if its spine is not periodic. Let $\mathfrak{c}_{prim}$ denote the set of primitive \c{c}arks. It is easy to see that primitive hyperbolic elements\footnote{Recall that an element $M\in \psl$ is said to be {\it primitive} if it is not a positive power of another element of the modular group.} in $\psl$ correspond to primitive \c{c}arks or equivalently to prime geodesics in $\mathbb{H}$. 

\begin{corollary}
	There is a one to one correspondence between the following two sets:
	\begin{center}
		\begin{tabular}{c c c}
			$\mathfrak{c}_{prim}$ & $\longleftrightarrow$ &$ \bigg \{$\begin{tabular}{c} $\psl$ classes of primitive \\ indefinite binary quadratic forms \\ having discriminant $\Delta \in \mathcal{D}$ \end{tabular} $\bigg\}$
		\end{tabular}
	\end{center}
\end{corollary}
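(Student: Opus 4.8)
The plan is to obtain the stated bijection as the composition of two correspondences already available to us, the bridge between them being the assertion---made just before the statement---that primitivity of a directed \c cark matches primitivity of the hyperbolic element it encodes. Throughout, $\mathfrak{c}_{prim}$ is read as the set of \emph{directed} primitive \c carks, which is the reading for which the two flanking bijections apply; the undirected version would only classify form classes up to inversion.

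First I would make the bridging claim precise: a directed \c cark $\ch_M=\langle M\rangle\backslash\F$ is primitive (its spine is aperiodic) if and only if $M$ is a primitive hyperbolic element, i.e. not a proper power of another hyperbolic element. For one direction, if $M=N^{k}$ with $k\geq 2$ and $N$ hyperbolic, then $\langle M\rangle=\langle N^{k}\rangle$ sits in $\langle N\rangle$ as a subgroup of index $k$, so the induced covering $\ch_M\to\ch_N$ restricts to a $k$-fold cyclic cover of spines; hence the binary necklace read once around the spine of $\ch_M$ is the $k$-th cyclic power of the necklace of $\ch_N$ and is in particular periodic. Conversely, if that necklace is a proper cyclic power, reading the associated word in $R,R^{2},S$ along a single period produces a hyperbolic element sharing the axis of $M$ but of smaller translation length, and I would identify it with the primitive generator $M_{0}$ below.

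Granting the equivalence, restricting Corollary~\ref{cor:1/to/1/corr/between/directed/carks}(ii) to its primitive members gives
\[
\mathfrak{c}_{prim}\;\longleftrightarrow\;\{\text{conjugacy classes of primitive hyperbolic elements of }\psl\}.
\]
I would then post-compose with the bijection quoted from \cite{sarnak/reciprocal/geodesics}, between conjugacy classes of primitive hyperbolic elements and $\psl$-classes of primitive binary quadratic forms of discriminant $\Delta\in\mathcal{D}$. Since every $\Delta\in\mathcal{D}$ is positive, such forms are exactly the primitive \emph{indefinite} forms of that discriminant, so the target agrees verbatim with the set in the statement, and the composite is the desired one-to-one correspondence.

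The main obstacle is the converse half of the bridging equivalence: one must see that a purely combinatorial periodicity of the spine genuinely yields an element $N\in\psl$ with $M=N^{k}$, $k\geq 2$, rather than a mere repetition of symbols. This is cleanest on the axis of $M$ in the topological Farey tree: the centralizer of $M$ in $\psl$ is the infinite cyclic group generated by the unique primitive hyperbolic $M_{0}$ sharing the axis, and $M=M_{0}^{k}$ where $k$ is precisely the number of times the fundamental period of the necklace repeats around the spine (equivalently $l_{c}(\ch_{M})=k\,l_{c}(\ch_{M_{0}})$). Thus aperiodicity of the spine is equivalent to $k=1$, i.e. to $M$ being primitive, and the rest is a routine chaining of the two bijections.
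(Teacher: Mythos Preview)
Your proposal is correct and follows precisely the route the paper has in mind: the Corollary is stated without proof, immediately after the sentence ``It is easy to see that primitive hyperbolic elements in $\psl$ correspond to primitive \c{c}arks'', and the intended argument is exactly the composition you write down---Corollary~\ref{cor:1/to/1/corr/between/directed/carks}(ii) restricted to primitives, chained with the proposition quoted from \cite{sarnak/reciprocal/geodesics}. Your extra care in justifying the bridging equivalence (aperiodic spine $\Leftrightarrow$ primitive hyperbolic generator, via the cyclic centralizer on the axis) and your observation that $\mathfrak{c}_{prim}$ must be read as \emph{directed} primitive \c{c}arks for the composite to be an honest bijection are both well taken; the paper glosses over both points.
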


Finally, let $\ch^{m}$ denote the mirror of a given \c{c}ark, that is the \c{c}ark obtained by reflecting $\ch$ with respect to the spine. Once again observe that both $\ch$ and $\ch^{m}$ have the same discriminant. In fact, an indefinite binary quadratic form say $f =  (A,B,C)$ is given which is represented by the \c{c}ark $\ch$ then the \c{c}ark $\ch^{m}$ represents the form $f' =  (-A,B,-C)$ and the same holds for every element in $[f]$. We conclude that both \c{c}arks represent ideal classes that have the same order in the class group.

Let $W$ be a hyperbolic element in $\psl$. In \cite{sarnak/reciprocal/geodesics}, Sarnak has defined $W$ to be reciprocal if $W$ is conjugate to its inverse. The conjugation turns out to be done by a unique element  (up to multiplication by an element in $\langle W \rangle$) of order $2$, and thus reciprocal elements correspond to dihedral subgroups of the modular group\footnote{Remember that primitive \c{c}arks correspond to maximal $\Z$-subgroups of $\psl$.}. A form $f =  (A,B,C)$ is called reciprocal if  $C = -A$. It is known that reciprocal hyperbolic elements correspond to reciprocal indefinite binary quadratic forms, \cite{sarnak/reciprocal/geodesics}. In a similar fashion we call a \c{c}ark \emph{reciprocal} if $\ch$ and $ (\ch^{m})^{r}$ are isomorphic as \c{c}arks. In fact since two operators $\cdot^{m}$ and $\cdot^{r}$ commute, if $\ch$ is a reciprocal \c{c}ark then so is $\ch^{m}$.

\begin{proposition}
	Reciprocal forms correspond to reciprocal \c{c}arks.
	\label{propn:reciprocal/carks}
\end{proposition}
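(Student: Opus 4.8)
The plan is to convert the defining symmetry $\ch\cong(\ch^{m})^{r}$ into a condition on the hyperbolic generator $M=M_{f}$ of the class, and then to recognize that condition as reciprocity in the sense of \cite{sarnak/reciprocal/geodesics}. First I would record the two reflections at the level of forms, using what is already established. We know that $\ch^{m}$ represents $(-A,B,-C)$, and by Proposition~\ref{propn:inverse/carks} that $\ch^{r}$ represents the inverse (opposite) class; since the opposite of $(a,b,c)$ is $(a,-b,c)$, composing the two operations shows that $(\ch^{m})^{r}$ represents the class of $(-A,-B,-C)=-f$. On the other hand a one-line computation from \eqref{eq:from/matrices/to/bqfs} gives $f_{M^{-1}}=-f_{M}$ for every hyperbolic $M$ (factoring $-1$ out of the middle entry), so the directed \c cark $(\ch^{m})^{r}$ is exactly the one attached to $M^{-1}$.

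Next I would invoke the bijection of Corollary~\ref{cor:1/to/1/corr/between/directed/carks} between directed \c carks and conjugacy classes of hyperbolic elements of $\psl$. Under this bijection $\ch$ corresponds to the class of $M$ and $(\ch^{m})^{r}$ to the class of $M^{-1}$, so $\ch\cong(\ch^{m})^{r}$ holds precisely when $M$ and $M^{-1}$ are conjugate in $\psl$; that is, the \c cark is reciprocal if and only if its generator is a reciprocal hyperbolic element. The delicate point here is the orientation bookkeeping: both $\cdot^{m}$ and $\cdot^{r}$ reverse the orientation of the spine, so their composite is orientation preserving and $(\ch^{m})^{r}$ is a genuine directed \c cark comparable with $\ch$; this is also why $\cdot^{m}$ and $\cdot^{r}$ commute, as already noted in the text.

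It then remains to match ``$M$ conjugate to $M^{-1}$'' with ``$C=-A$''. For the easy direction, if $f=(A,B,-A)$ then $S\ccdot f=(C,-B,A)=(-A,-B,A)=-f$, so $S$ carries the axis of $M$ to itself while interchanging its two fixed points (whose product is $C/A=-1$), whence $SMS^{-1}=M^{-1}$ and the form is reciprocal. Conversely, a conjugacy $M\sim M^{-1}$ is realized by an involution swapping the two fixed points of $M$, which forces their product to equal $-1$ and hence $C=-A$ after rescaling; alternatively one simply quotes the equivalence of reciprocal elements and reciprocal forms from \cite{sarnak/reciprocal/geodesics}. The main obstacle I anticipate is exactly this last equivalence together with the orientation bookkeeping of the preceding step: the form-level computation of $(\ch^{m})^{r}$ is routine, but one must ensure that the two reflections are taken with respect to the fixed ambient orientation, so that the resulting directed \c cark genuinely encodes $M^{-1}$ rather than merely the unoriented subgroup $\langle M\rangle=\langle M^{-1}\rangle$, for which the statement would be vacuous.
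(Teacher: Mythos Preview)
The paper states this proposition without proof: it is followed immediately by two figures illustrating quotients by dihedral subgroups and then by examples, with no \texttt{proof} environment anywhere. So there is no argument in the paper to compare yours against.

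On its own merits your approach is correct. The form-level identification of $(\ch^{m})^{r}$ with the class of $-f$ follows from the paper's description of $\ch^{m}$ and Proposition~\ref{propn:inverse/carks}; the identity $f_{M^{-1}}=-f_{M}$ is a direct computation from \eqref{eq:from/matrices/to/bqfs}; and the passage via Corollary~\ref{cor:1/to/1/corr/between/directed/carks} to the conjugacy condition $M\sim M^{-1}$ is exactly the right bridge. Your appeal to \cite{sarnak/reciprocal/geodesics} for the final equivalence between reciprocal hyperbolic elements and classes containing a form with $C=-A$ is the same reference the paper invokes in the paragraph preceding the proposition, and your direct verification of the easy direction via $S\cdot(A,B,-A)=-(A,B,-A)$ is clean.

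You are also right that the orientation bookkeeping is the genuine content here, and it is worth stating even more sharply than you do: since $\mathrm{Aut}(-f)=\mathrm{Aut}(f)$, the \emph{undirected} \c cark underlying $(\ch^{m})^{r}$ is literally $\langle M_{f}\rangle\backslash\F=\ch$ again, so without passing to directed \c carks the condition is vacuous. Your observation that $\cdot^{m}$ and $\cdot^{r}$ are each orientation-reversing on the ambient annulus, hence their composite preserves the canonical spine direction, is precisely what allows the comparison in the directed category and matches the paper's remark at the end of \S\ref{sec:necklace/bracelet} that reciprocity is palindromic symmetry of the associated necklace.
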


\begin{figure}[H]
	\centering
	 \includegraphics[scale=0.75]{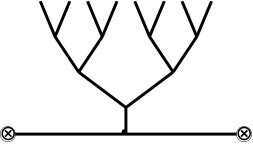}
	\caption{The graph $\mathcal{F}/\langle  S,R^2SR  \rangle $}
	\label{fig:quotient/by/hyperbolic}
\end{figure}

\begin{figure}[H]
	\centering
	 \includegraphics[scale=0.75]{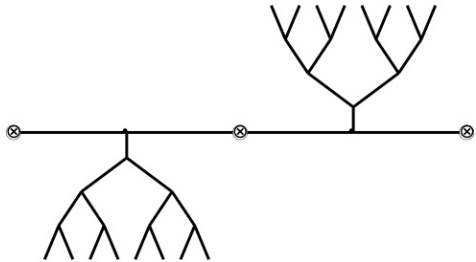}
	\caption{ The graph $\mathcal{F}/\langle  RSR^2,S (RSR^2)S  \rangle $}
	\label{fig:quotient/by/hyperbolic}
\end{figure}

\begin{example}
	Consider the form $f =  (-8,11,8)$. The corresponding hyperbolic element in $\psl$ is $\mat{101 & -192}{-192 & 365}$. The corresponding \c{c}ark is shown in Figure~\ref{fig:reciprocal}, where it is easy to see that $\ch$ and $ (\ch^{m})^{r}$ are same.
	\label{ex:reciprocal/cark}
\end{example}

\begin{example}[Reciprocal Identities]
	The forms $f =  (1,n^{2},-1)$ already appeared in Example~\ref{ex:identities/having/two/Farey/components} are reciprocal and represent identity in the class group. Note also that such forms come from the word $ (R^{2}S)^{n} (RS)^{n}$. The \c{c}arks of these reciprocal identities are in Figure~\ref{fig:identity/5n2}.
	\label{ex:reciprocal/identities}
\end{example}

\begin{figure}[h!]
	\centering
	\includegraphics[scale=0.23]{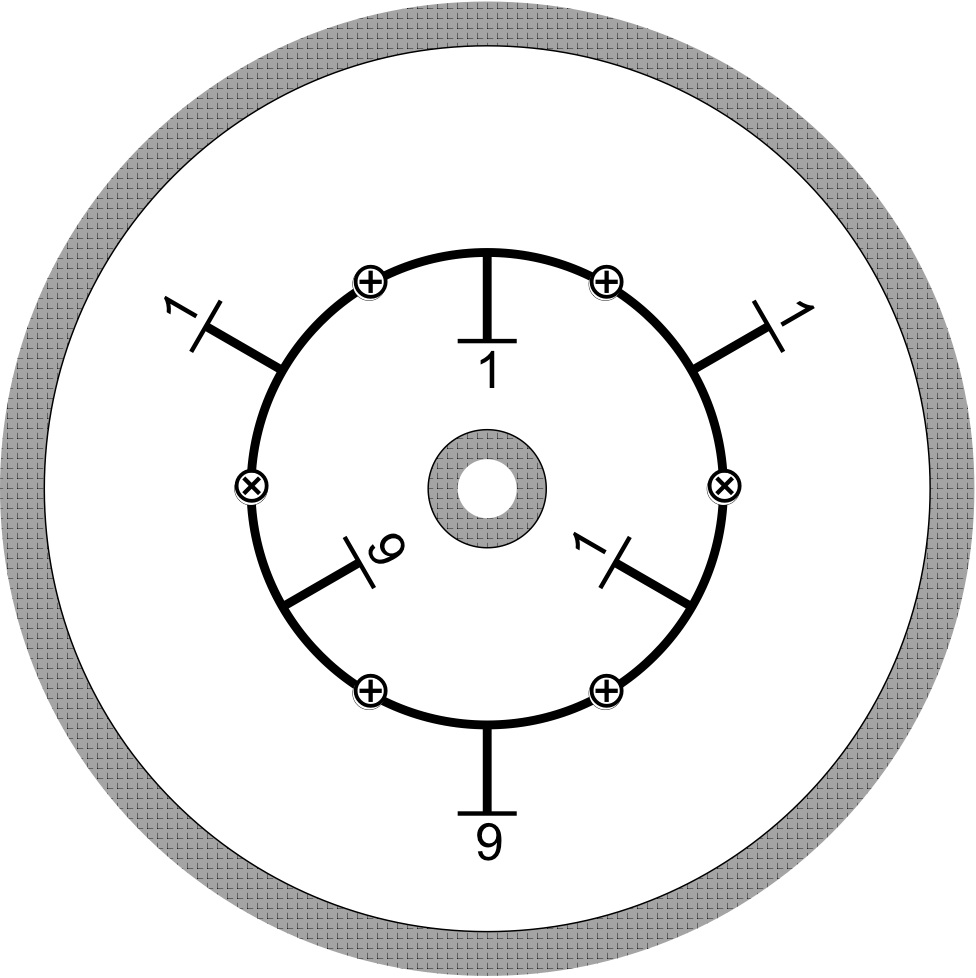}
	\caption{\c{C}ark corresponding to the reciprocal form $f =  (-8,11,8)$.}
	\label{fig:reciprocal}
\end{figure}

\subsection{Miscellany}
Binary quadratic forms is a central and classical topic and have connections to diverse fields. Here we touch upon some of these.

\subsubsection{Computational Problems}
There are several important computational problems related to \c carks, in connection with the class number problems in the indefinite case. The most basic invariant of a \c cark is the length of its spine. The  (absolute) trace of the associated matrix is another, much subtler invariant. The problem of listing \c{c}arks of the same trace is equivalent to the problem of computing class numbers. Also, the Gauss product on classes of forms defines an abelian group structure on the set \c{c}arks of the same trace, namely the {\it class group}. It is a work in progress to reach to a new understanding of class groups in terms of the graphical representation of their elements by  \c carks.


\subsubsection{Closed geodesics on the modular surface.}
Let us note in passing that primitive \c{c}arks parametrize closed geodesics on the modular curve, and so \c{c}arks are closely connected to symbolic dynamics on the modular curve, 
see \cite{katok/ugarcovici}, encoding of geodesics, and Selberg's trace formula, see \cite{zagier/new/points/of/view/on/the/selberg/zeta}.

\subsubsection{The Markoff number of an indefinite binary quadratic form.}
There is an arithmetic invariant of indefinite binary quadratic forms called the Markoff value $\mu (F)$ which is defined as
$$
\mu (F):= \frac{\sqrt{\Delta (F)}}{m(f)} , \mbox{ where } 
m(f):={\min_{ (x,y)\in \Z^2 \setminus \{ (0,0)\}} |F (x,y)|} 
$$

Alternatively one can run over the class of $F$ and compute the minima of equivalent forms at a fixed point $p_0$, for example $ (x,y)= (0,1)$. 
Hence the choice of this fixed point $p_0$ defines a function on the set of edges of the associated \c{c}ark, and the Markoff value of the form is the maximal value attained by this function defined on the \c{c}ark. There are also \c{c}arks associated to Markoff irrationalities which we call {\it Markoff \c{c}arks}. A solution to the representation problem of indefinite binary quadratic forms is given in \cite{reduction} and as a by-product Markoff value of a given form can be computed. The algorithms will be available within the software developed by the first two authors and their collaborators, \cite{sunburst}.

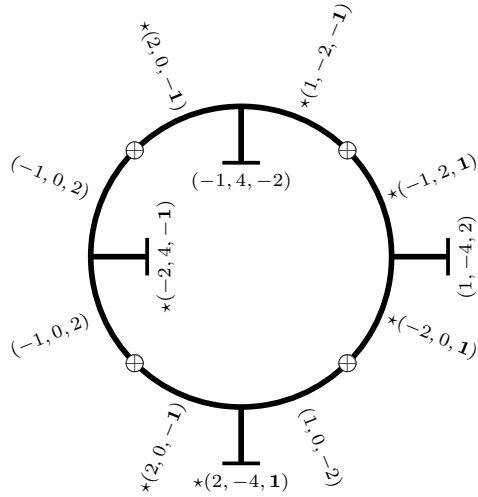
\begin{figure}[H]
	\centering
	\begin{tikzpicture} [scale=0.5]

		\draw[line width=0.75mm]  (0,0) circle  (4cm);

		\draw[line width=0.75mm]  (0,-4)-- (0,-5.5);
		\draw[line width=0.75mm]  (0,4)-- (0,2.5);
		\draw[line width=0.75mm]   (-4,0)-- (-2.5,0);
		\draw[line width=0.75mm]  (4,0)-- (5.5,0);

		\draw[line width=0.5mm,rotate=90]  (-0.5,2.5)-- (0.5,2.5);
		\draw[line width=0.5mm,rotate=90]  (-0.5,-5.5)-- (0.5,-5.5);
		\draw[line width=0.5mm]  (-0.5,2.5)-- (0.5,2.5);
		\draw[line width=0.5mm]  (-0.5,-5.5)-- (0.5,-5.5);

		\node at  (0,2) [ultra thick]{\scriptsize $ (-1,4,-2)$};
		\node at  (0,-6) [ultra thick]{\scriptsize $\star (2,-4,{\mathbf 1})$};
		\node at  (6,0) [ultra thick,rotate=90]{\scriptsize $ (1,-4,2)$};
		\node at  (-2,0) [ultra thick,rotate=90]{\scriptsize $\star (-2,4,-{\mathbf 1})$};
		\node at  (5.5*cos{22.5},5.5*sin{22.5}) [ultra thick,rotate=22.5]{\scriptsize $\star (-1,2,{\mathbf 1})$};
		\node at  (5.5*cos{67.5},5.5*sin{67.5}) [ultra thick,rotate=67.5]{\scriptsize $\quad \star (1,-2,-{\mathbf 1})$};
		\node at  (-5.5*cos{22.5},5.5*sin{22.5}) [ultra thick,rotate=-22.5]{\scriptsize $ (-1,0,2)$};
		\node at  (-5.5*cos{67.5},5.5*sin{67.5}) [ultra thick,rotate=-67.5]{\scriptsize $\star (2,0,-{\mathbf 1})$};
		\node at  (5.5*cos{22.5},-5.5*sin{22.5}) [ultra thick,rotate=-22.5]{\scriptsize $\star (-2,0,{\mathbf 1})$};
		\node at  (5.5*cos{67.5},-5.5*sin{67.5}) [ultra thick,rotate=-67.5]{\scriptsize $ (1,0,-2)$};
		\node at  (-5.5*cos{22.5},-5.5*sin{22.5}) [ultra thick,rotate=22.5]{\scriptsize $ (-1,0,2)$};
		\node at  (-5.5*cos{67.5},-5.5*sin{67.5}) [ultra thick,rotate=67.5]{\scriptsize $\star (2,0,-{\mathbf 1})$};


		\draw  (4*cos{45},4*sin{45}) circle  (2.25mm);
		\fill[white]  (4*cos{45},4*sin{45}) circle  (2.25mm);
		\node at  (4*cos{45},4*sin{45}) [ultra thick]{ $\oplus$};
		\draw  (4*cos{-45},4*sin{-45}) circle  (2.25mm);
		\fill[white]  (4*cos{-45},4*sin{-45}) circle  (2.25mm);
		\node at  (4*cos{-45},4*sin{-45}) [ultra thick]{ $\oplus$};
		\draw  (4*cos{135},4*sin{135}) circle  (2.25mm);
		\fill[white]  (4*cos{135},4*sin{135}) circle  (2.25mm);
		\node at  (4*cos{135},4*sin{135}) [ultra thick]{ $\oplus$};
		\draw  (4*cos{-135},4*sin{-135}) circle  (2.25mm);
		\fill[white]  (4*cos{-135},4*sin{-135}) circle  (2.25mm);
		\node at  (4*cos{-135},4*sin{-135}) [ultra thick]{ $\oplus$};
	\end{tikzpicture}
	\caption{Minimum edges of $\F / Aut(\{ (1,0,-2)\})$($\star$ stands for forms which attain the minimum.).}
	\label{fig:quotient/by/hyperbolic}
\end{figure}

To conclude the paper, let us rephrase our main result: we show that the class of every primitive indefinite binary quadratic form is not simply a set but it has the extra structure of an infinite graph, namely a \c{c}ark, such that the forms in the class are identified with the edges of the graph. This graph admits a topological realization as a subset of an annulus and explains very well some known phenomena around Gauss' reduction theory of forms and Zagier's reduction of elements of $\psl$ as explained in \cite{katok/ugarcovici}. In our point of view both Gauss reduced forms and Zagier reduced forms correspond to edges on the what we call spine of the \c{c}ark. Various properties of forms and their classes are manifested in a natural way on the \c{c}ark. The first instance of such a question concerning binary quadratic forms has been addressed by the second named author in \cite{reduction}, where he has given an improvement of Gauss' reduction of binary quadratic forms, and has given solutions to the minimum problem and representation problem of binary quadratic forms.

\paragraph{Acknowledgements.}
\label{ackref}
The first named author is thankful to Max Planck Institute at Bonn for their hospitality during the preparation of the current paper. This research has been funded by the T\"UB\.ITAK grant 110T690. The first named author was also funded by the Galatasaray University Research Grant 12.504.001. The second named author is funded by Galatasaray University Research Grant 13.504.001.


%
\end{document}